\newtheorem{theorem}{Theorem}[section]
\newtheorem{proposition}[theorem]{Proposition}
\newtheorem{lemma}[theorem]{Lemma}
\newtheorem{corollary}[theorem]{Corollary}
\newtheorem{example}[theorem]{Example}
\newtheorem{definition}[theorem]{Definition}
\numberwithin{equation}{section}
\newcommand{\1}[1]{{\tilde{#1}}}
\newcommand{\2}[1]{{\widetilde{#1}}}
\newcommand{\essup}{\operatorname{ess\,sup}}
\newcommand{\Int}{\operatorname{Int}}
\newcommand{\overbar}[1]{\mkern 4mu\overline{\mkern-4mu#1\mkern-1.5mu}\mkern 1.5mu}
\author{Mieczys{\l}aw Masty{\l}o}
\address{Faculty of Mathematics and Computer Science,
Adam Mickiewicz University;
Umultowska 87, 61-614 Pozna\'n, Poland}
\email{mastylo@amu.edu.pl}
\thanks{The first author was supported by the Foundation for Polish Science (FNP)}
\author{Gord Sinnamon}
\address{Department of Mathematics,
University of Western Ontario,
London, Canada}
\email{sinnamon@uwo.ca}
\thanks{The second author was supported by the Natural Sciences and Engineering Research Council of Canada}
\keywords{Calder\'on couple, Calder\'on-Mityagin couple, least decreasing majorant, level function}
\subjclass[2010]{Primary 46B70, Secondary 46E30, 46B42}
\begin{document}

\title[Couples related to decreasing functions]{Calder\'on-Mityagin couples of Banach spaces related to decreasing functions}

\begin{abstract}  A number of Calder\'on-Mityagin couples and relative Calder\'on-Mityagin pairs are identified among Banach function spaces defined in terms of the least decreasing majorant construction on the half line. The interpolation structure of such spaces is shown to closely parallel that of the rearrangement invariant spaces, and it is proved that a couple of these spaces is a Calder\'on-Mityagin couple if and only if the corresponding couple of rearrangement invariant spaces is a Calder\'on-Mityagin couple. Consequently, the class of all interpolation spaces for any couple of spaces of this type admits a complete description by the $K$-method if and only if the class of all interpolation spaces for the corresponding couple of rearrangement invariant spaces does. Analogous results are proved for spaces defined in terms of the level function construction.
In the main, the conclusions for both types of spaces remain valid when Lebesgue measure on the half line is replaced by a general Borel measure on $\mathbb R$. However, for certain measures the class of interpolation spaces of these new spaces may be degenerate, reducing the ``if and only if'' of the main results to a single implication.
\end{abstract}

\maketitle

\section{Introduction}
The problem of characterizing the class of all interpolation spaces for a given couple of compatible Banach spaces is one of the fundamental problems in interpolation theory. Although there are many constructions used to generate interpolation spaces, it can be very difficult to show that all interpolation spaces have been generated. For the couple $(L^1, L^\infty)$, however, the problem has been completely solved.  Mityagin \cite{M} in 1965 and, independently, Calder\'on \cite{C} in 1966 gave characterizations of the class of all interpolation spaces with respect to $(L^1, L^\infty)$. This discovery was an important point in the study of abstract interpolation spaces. 

Couples for which a similar characterization exists are called Calder\'on-Mityagin couples. While the benefits are great, the problem of determining whether or not a given Banach couple is a Calder\'on-Mityagin couple is still a difficult one in general. Nonetheless, many Calder\'on-Mityagin couples have been discovered; we refer to papers \cite{CNS} and \cite{Ka} and references therein related to this topic.

We concern ourselves here with two couples closely related to $(L^1, L^\infty)$. They are based on a pair of dual constructions, the least decreasing majorant and the level function, which arise naturally when monotone functions are involved and have been used to solve a variety of problems in the theory of function spaces and norm inequalities. For example, they were applied to describe the dual of the classical Lorentz spaces in \cite{Halp} and to give weight conditions for the boundedness of the Hardy operator in \cite{S1991, SS}. They have also been used to study absolute convergence of Fourier series, by Beurling in \cite{Be} and more recently in \cite{BLT}. See the references in \cite{BLT} for additional applications.

Spaces defined in terms of the least decreasing majorant construction form a class of subspaces of the interpolation spaces for $(L^1, L^\infty)$, while spaces defined in terms of the level function construction form a class of superspaces of the interpolation spaces for $(L^1, L^\infty)$. We will show that the parallels between these three classes of function spaces are very strong. Specifically, we show that a couple of interpolation spaces from one class is a Calder\'on-Mityagin couple if and only if the corresponding couple in either of the other two classes is also a Calder\'on-Mityagin couple.

The \emph{least decreasing majorant} of a Lebesgue measurable function $f$ on $(0,\infty)$, is defined by,
\[
\1f(x)=\essup\{|f(y)|: y\ge x\}.
\]
The map $f\mapsto\1f$ is sublinear, that is, $\2{f+g\,}\le\1f+\1g$. For any $f$, $\1f$ is non-negative and decreasing, $|f|\le\1f$ almost everywhere, and $\1f$ is minimal among functions with these two properties. In consequence, $\1f=f$ whenever $f$ is non-negative and decreasing; in particular,  the map is a projection, that is, $\1{\1f}=\1f$. If $X$ is a Banach function space of Lebesgue measurable functions on $(0,\infty)$, then $\2X$ is the space of functions $f$ for which $\1f\in X$, equipped with the norm $\|f\|_{\2X}=\|\1f\|_X$. Sublinearity makes it routine to verify that $\2X$ is a Banach function space.

The level function of $f$, denoted $f^o$, is a closely related projection defined (almost everywhere) to be the derivative of the least concave majorant of $\int_0^x |f|$, provided it is finite. Results of \cite{SLFD} show that that if $0\le f_n\uparrow f$ then $f_n^o\uparrow f^o$; this property is used to define $f^o$ in the event that the least concave majorant of $\int_0^x |f|$ is infinite for some $x>0$. For any $f$, $f^o$ is non-negative and decreasing, $f^o=f$ whenever $f$ is non-negative and decreasing, and $(f^o)^o=f^o$. If $X$ is an exact interpolation space for $(L^1,L^\infty)$, spaces of Lebesgue measurable functions on $(0,\infty)$, then $X^o$ is the space of functions $f$ for which $f^o\in X$, equipped with the norm $\|f\|_{X^o}=\|f^o\|_X$. Verification that $X^o$ is a Banach function space is routine, except for the triangle inequality.  If $f,g\in X^o$ then the function $x\mapsto\int_0^x(f^o+g^o)$ is a concave majorant of $x\mapsto\int_0^x|f+g|$ and hence $\int_0^x(f+g)^o\le\int_0^x(f^o+g^o)$ for all $x>0$. The hypothesis on $X$ and Calder\'on's results from \cite{C} show that
\[
\|f+g\|_{ X^o}=\|(f+g)^o\|_X\le\|f^o+g^o\|_X\le\| f^o\|_X+\| g^o\|_X=\|f\|_{ X^o}+\|g\|_{ X^o}.
\]

It is immediate that $\2{L^\infty}=L^\infty$ with identical norms and $(L^1)^o=L^1$ with identical norms, so in future we avoid writing $\2{L^\infty}$ and $ (L^1)^o$. On the other hand, it is clear that $\|f\|_{L^1}\le\|f\|_{\2{L^1}}$ whenever $f\in\2{L^1}$ and a calculation shows that,
\begin{equation}\label{DooNorm}
\|f\|_{(L^\infty)^o}=\sup_{x>0}\frac1x\int_0^x|f|\le\|f\|_{L^\infty}, \quad f\in L^\infty.
\end{equation}

To complete this introduction, we recall some fundamental definitions, notation and two key known results that will be used in the paper. For notation and background for Banach function spaces we refer to \cite{Z} and for interpolation we refer the reader to \cite{BS2} and \cite{BK}. A pair $(X_0,X_1)$ of Banach spaces that can be continuously embedded into a common Hausdorff topological vector space is called a \emph{compatible couple}. If $(X_0,X_1)$ and $(Y_0,Y_1)$ are compatible couples we write $T:(X_0,X_1)\to(Y_0,Y_1)$ to indicate that $T:X_0+X_1\to Y_0+Y_1$ is a linear operator and the restriction of $T$ to $X_j$ is a bounded map into $Y_j$ for $j=0,1$. The \emph{norm} of such an operator is the maximum of the norms of these two maps. A subspace $X$ of $X_0+X_1$, containing $X_0\cap X_1$, is called an \emph{exact interpolation space} for $(X_0,X_1)$ provided every linear operator $T:(X_0,X_1)\to(X_0,X_1)$ of norm one is also a bounded map from $X$ to $X$ of norm at most one. Let $\Int_1(X_0,X_1)$ denote the collection of all exact interpolation spaces for $(X_0,X_1)$.

The \emph{$K$-functional} of $x\in X_0+X_1$ is defined by,
\[
K(t,x;X_0,X_1)=\inf\{\|x_0\|_{X_0}+t\|x_1\|_{X_1}: x_0\in X_0, x_1\in X_1, x_0+x_1=x\}.
\]
A pair of compatible couples $[(X_0, X_1),(Y_0,Y_1)]$ form a \emph{$c$-uniform relative Calder\'on-Mityagin pair} if whenever $K(t,y;Y_0,Y_1)\le K(t,x;X_0,X_1)$ for all $t>0$,
there exists a $T:(X_0,X_1)\to(Y_0,Y_1)$, of norm at most $c$, such that $Tx=y$. If  $(X_0,X_1)=(Y_0,Y_1)$ we call $(X_0,X_1)$ a $c$-\emph{uniform Calder\'on-Mityagin couple.} If $c=1$ we call the pair (or the couple) \emph{exact}.

With these definitions, the famous description, due to Calder\'on \cite{C}, of all interpolation of spaces with respect to $(L^1, L^\infty)$  simply states: \emph{The couple $(L^1,L^\infty)$ is an exact Calder\'on-Mityagin couple.}

A \emph{parameter of the $K$-method} is a Banach function space $\Phi$ of functions on the measure space $((0,\infty),dt/t)$ that contains the function $t\mapsto\min(1,t)$. The \emph{$K$-method} with parameter $\Phi$ is the interpolation functor that takes the couple $(X_0,X_1)$ to the space $(X_0,X_1)_\Phi$ consisting of all $x\in X_0+X_1$ for which the norm,
\[
\|x\|_{(X_0,X_1)_\Phi}=\|K(\cdot,x; X_0,X_1)\|_\Phi,
\]
is finite. The \emph{$K$-divisibility constant} of a couple $(X_0,X_1)$ is the smallest $\gamma$ such that whenever $K(\cdot,x;X_0,X_1)\le\sum_{j=1}^\infty \varphi_j$, with $0\le\varphi_j$ concave on $(0,\infty)$, there exist $x_j$ such that $x=\sum_{j=1}^\infty x_j$ and $K(\cdot,x_j;X_0,X_1)\le\gamma\varphi_j$ for each $j$. The $K$-divisibility constant of $(L^1,L^\infty)$ is known to be $1$.

Brudny\u\i{} and Krugljak \cite[Theorem 4.4.5]{BK} showed that $\gamma<14$ for every couple, and used the $K$-method to give a complete description of all interpolation spaces for Calder\'on-Mityagin couples: {\it If $(X_0, X_1)$ is a $c$-uniform Calder\'on-Mityagin couple and $X\in\Int_1(X_0,X_1)$, then there exists a parameter of the $K$-method,  $\Phi$,
such that $X=(X_0,X_1)_\Phi$ and
\[
\|x\|_{(X_0,X_1)_\Phi} \leq \|x\|_X \leq c \gamma \|x\|_{(X_0,X_1)_\Phi}, \quad x\in X.
\]
Here $\gamma$ is the $K$-divisibility constant of $(X_0, X_1)$.}

In the special case that $(X_0,X_1)$ is an exact Calder\'on-Mityagin couple with $K$-divisibility constant $\gamma=1$, the conclusion is that $X=(X_0,X_1)_\Phi$ with identical norms. It is this case we will apply to get our main results in Section \ref{CCaRCP}.

\section{Operators Between Couples Close To $(L^1,L^\infty)$}\label{operators}

In this section we recall or construct a number of linear operators that will be needed to work with Calder\'on-Mityagin couples and relative Calder\'on-Mityagin pairs. Since $\2{L^\infty}=L^\infty$ and $ (L^1)^o=L^1$ we write the couple $(\2{L^1}, \2{L^\infty})$ as $(\2{L^1}, L^\infty)$ and the couple $( (L^1)^o,(L^\infty)^o)$ as $(L^1,(L^\infty)^o)$. Their $K$-functionals have been calculated in \cite[Theorem 1]{SInt} and  \cite[Lemma 2.1]{MS}.

\begin{proposition}\label{Kfnls} If $f\in \2{L^1}+L^\infty$ then $K(t,f;\2{L^1},L^\infty)=K(t,\1f;L^1,L^\infty)$ for $t>0$. If $f\in L^1+(L^\infty)^o$ then $K(t,f;L^1,(L^\infty)^o)=K(t,f^o;L^1,L^\infty)$.
\end{proposition}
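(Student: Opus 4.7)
Both identities claim that a $K$-functional of $f$ equals $\int_0^t g$ for $g=\1f$ or $g=f^o$; I use that $K(t,g;L^1,L^\infty)=\int_0^t g^*(s)\,ds$ and that $g^*=g$ whenever $g$ is non-negative and decreasing, so the right-hand sides reduce to $\int_0^t\1f$ and $\int_0^t f^o$ respectively. I will prove each identity by establishing matching upper and lower bounds.

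For the first identity, to show $K(t,f;\2{L^1},L^\infty)\le\int_0^t\1f$, I use the decomposition $f_0=(|f|-c)^+\,\mathrm{sgn}(f)$ and $f_1=\min(|f|,c)\,\mathrm{sgn}(f)$, where $c=\1f(t)$. The identity $\essup_{y\ge x}(|f(y)|-c)^+=(\1f(x)-c)^+$ gives $\1{f_0}(x)=(\1f(x)-c)^+$, which vanishes for $x\ge t$ since $\1f$ is decreasing; hence $\|f_0\|_{\2{L^1}}=\int_0^t\1f - tc$ and $\|f_1\|_\infty\le c$ combine to give the bound. The reverse inequality follows from the sublinearity $\1f\le\1{f_0}+\1{f_1}$: splitting $\1f=g_0+g_1$ with $g_0=\min(\1f,\1{f_0})\le\1{f_0}\in L^1$ and $g_1=\1f-g_0\le\1{f_1}\le\|f_1\|_\infty$ produces an admissible decomposition of $\1f$ in $(L^1,L^\infty)$ with $\|g_0\|_{L^1}+t\|g_1\|_\infty\le\|f_0\|_{\2{L^1}}+t\|f_1\|_\infty$.

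For the second identity, let $F(x)=\int_0^x|f|$ and let $\hat F$ be its least concave majorant, so $\int_0^t f^o=\hat F(t)$. The lower bound $K(t,f;L^1,(L^\infty)^o)\ge\hat F(t)$ is direct: every decomposition $f=f_0+f_1$ yields the affine majorant $F(s)\le\|f_0\|_{L^1}+s\|f_1\|_{(L^\infty)^o}$ of $F$, whose value at $t$ bounds $\hat F(t)$ by the defining property of the least concave majorant. The upper bound is the main obstacle. Set $B=f^o(t)$ and $A=\hat F(t)-tB\ge0$; since $\hat F$ has derivative $f^o$, the tangent line $A+Bs$ majorizes $\hat F$, hence $F$. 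I will construct $h$ with $0\le h\le|f|$, $\int_0^\infty h\le A$, and $\int_0^x(|f|-h)\le Bx$ for every $x>0$; the decomposition $f_0=h\,\mathrm{sgn}(f)$, $f_1=(|f|-h)\,\mathrm{sgn}(f)$ then yields $\|f_0\|_{L^1}+t\|f_1\|_{(L^\infty)^o}\le A+tB=\hat F(t)$. The candidate is $h=\Psi'$ where $\Psi(x)=\sup_{0\le s\le x}(F(s)-Bs)$: this is non-negative, non-decreasing, bounded by $A$ (by the tangent-line majorization), and absolutely continuous, so $h$ is well-defined a.e.\ with $\int_0^\infty h=\Psi(\infty)\le A$ and $\int_0^x h=\Psi(x)\ge F(x)-Bx$. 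The technical crux is verifying $h\le|f|$: on the contact set $\{x:\Psi(x)=F(x)-Bx\}$ the derivative equals $|f|-B$ (necessarily non-negative, since $\Psi$ is non-decreasing), while off this set $\Psi$ is locally constant, so $\Psi'=0$; in both cases $0\le\Psi'\le|f|$.
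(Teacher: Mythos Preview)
The paper does not supply its own proof here; it simply records that the two formulas were computed in \cite[Theorem~1]{SInt} and \cite[Lemma~2.1]{MS}. So there is no argument in the paper to compare against, and your self-contained verification stands on its own. It is correct, with two small points in the second identity worth tightening. First, you set $B=f^o(t)$, but $f^o$ is only defined almost everywhere; take $B$ to be any supporting slope of $\hat F$ at $t$, e.g.\ the right derivative $\hat F'_+(t)$, after which nothing else changes. Second, you assert that $\Psi(x)=\sup_{0\le s\le x}(F(s)-Bs)$ is absolutely continuous and then argue $\Psi'\le|f|$ via the contact set, which is a little informal at boundary points. Both issues are handled at once by the direct estimate
\[
0\le\Psi(y)-\Psi(x)\le\int_x^y(|f|-B)^+\quad\text{for }x<y,
\]
valid because when $\Psi(y)>\Psi(x)$ the supremum defining $\Psi(y)$ is approached on $(x,y]$ while $\Psi(x)\ge F(x)-Bx$; this yields absolute continuity together with $0\le\Psi'\le(|f|-B)^+\le|f|$ almost everywhere, which is exactly what you need. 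Your treatment of the first identity needs no adjustment.
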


Taking $t=1$ in this proposition shows that we have $(L^1+L^\infty\2{)\,}=\2{L^1}+L^\infty$ and $(L^1+L^\infty )^o=L^1+(L^\infty)^o$, with identical norms in both cases.

Our first operator does double duty, as a map from $(L^1,L^\infty)\to(\2{L^1}, L^\infty)$ and a map from $(L^1,(L^\infty)^o)$ to $(L^1,L^\infty)$.
\begin{lemma}\label S There is a linear map $S:L^1+(L^\infty)^o\to L^1+(L^\infty)^o$, such that $h\le Sh$ for any non-negative, decreasing $h$; and
\begin{enumerate}[leftmargin=1.85em, label=\rm{(\alph*)}]
\item\label{LooLoo} $S:L^\infty\to L^\infty$ has norm at most $1$,
\item\label{LoneLone} $S:L^1\to L^1$ has norm at most $2$,
\item\label{Lone2Lone} $S:L^1\to\2{L^1}$ has norm at most $4$,  and
\item\label{DooLoo} $S:(L^\infty)^o\to L^\infty$ has norm at most $2$.
\end{enumerate}
\end{lemma}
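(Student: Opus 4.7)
I plan to take the explicit linear operator
\[
Sf(x) = \frac{2}{x^2}\int_0^x t\,f(t)\,dt,
\]
a weighted Hardy-style average. It is defined on $L^1+(L^\infty)^o$ because every element of this sum is locally integrable on $(0,\infty)$. The normalization $\frac{2}{x^2}\int_0^x t\,dt=1$ together with the pointwise estimate $h(t)\ge h(x)$ for $t\in(0,x]$ immediately gives $Sh(x)\ge h(x)$ whenever $h$ is nonnegative and decreasing.

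I would then verify (a), (b), and (d) as direct calculations: bounding $|f(t)|\le\|f\|_\infty$ inside the integral handles (a); Fubini in $\int_0^\infty|Sf(x)|\,dx\le\int_0^\infty t|f(t)|\int_t^\infty\frac{2}{x^2}\,dx\,dt=2\|f\|_1$ handles (b); and the chain $\int_0^x t|f(t)|\,dt\le x\int_0^x|f(t)|\,dt\le x^2\|f\|_{(L^\infty)^o}$, the last step by \eqref{DooNorm}, handles (d).

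The main obstacle is (c). The plan is to first reduce to the nonnegative case by noting $|Sf|\le S|f|=:Tf$, so $\1{Sf}\le\1{Tf}$. Next, I would regard $Tf$ as a superposition of simple kernels,
\[
Tf(y) = \int_0^\infty K_a(y)|f(a)|\,da, \qquad K_a(y) = \frac{2a}{y^2}\chi_{[a,\infty)}(y).
\]
Each $K_a$ is decreasing on $[a,\infty)$ and vanishes before, so its least decreasing majorant is $\1{K_a}(x)=2a/\max(x,a)^2$, satisfying $\int_0^\infty\1{K_a}(x)\,dx=4$. The essential step is the interchange $\essup_{y\ge x}\int g(a,y)\,da\le\int\essup_{y\ge x}g(a,y)\,da$, valid for nonnegative $g$, which yields $\1{Tf}(x)\le\int_0^\infty\1{K_a}(x)|f(a)|\,da$. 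A final application of Fubini then completes the bound $\|\1{Tf}\|_1\le 4\|f\|_1$.

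The delicate feature of (c) is that $\widetilde{\cdot}$ is only sublinear, which forbids estimates obtained by directly integrating by parts in $\1{Tf}$; the kernel decomposition reduces the problem to point-mass contributions, where the constant $4$ is attained in the limit $f=\delta_a$, so the bound is in fact sharp for this choice of $S$.
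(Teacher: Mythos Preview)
Your proof is correct, but it constructs a genuinely different operator from the one in the paper. The paper takes
\[
Sh=\sum_{j\in\mathbb Z}\frac{\chi_{[2^j,2^{j+1})}}{2^j-2^{j-1}}\int_{2^{j-1}}^{2^j}h,
\]
a piecewise-constant dyadic average shifted one block to the right, while you use the continuous weighted Hardy operator $Sf(x)=\frac{2}{x^2}\int_0^x t\,f(t)\,dt$. Both are linear, both dominate non-negative decreasing functions pointwise, and both hit exactly the constants $1,2,4,2$ in (a)--(d). The paper's choice makes part (c) nearly automatic: since $Sh$ is already a step function, one can write down an explicit decreasing majorant (replace each $\chi_{[2^j,2^{j+1})}$ by $\chi_{(0,2^{j+1})}$) and integrate it directly. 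Your continuous operator requires the additional, but natural, idea of decomposing $S|f|$ as a superposition of one-parameter kernels $K_a$ and bounding $\widetilde{\,\cdot\,}$ under the integral via $\sup\int\le\int\sup$; this works smoothly here because $Tf$ and the $K_a$ are continuous, so the essential suprema are genuine suprema. In return, your operator is given by a single analytic formula and your argument exhibits why the constant $4$ is sharp for this particular $S$, which the dyadic construction does not make visible.
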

\begin{proof} For $h\in L^1+(L^\infty)^o$ let
$$
Sh=\sum_{j\in\mathbb Z} \frac{\chi_{[2^j,2^{j+1})}}{2^j-2^{j-1}}\int_{2^{j-1}}^{2^j}h.
$$
At each point the sum has at most one term, and $h$ is integrable on each interval $(2^j,2^{j+1})$, so $Sh$ well-defined for each $h\in L^1+(L^\infty)^o$. If $h$ is non-negative and decreasing, and $x>0$, then, with $j\in \mathbb Z$ chosen so that $x\in[2^j,2^{j+1})$,
\[
h(x)\le h(2^j)\le\frac1{2^j-2^{j-1}}\int_{2^{j-1}}^{2^j}h=Sh(x).
\]
The value of $Sh$ at each point is an average of $h$ over an interval so, if $h\in L^\infty$, we see $\|Sh\|_{L^\infty}\le\|h\|_{L^\infty}$, which proves \ref{LooLoo}. If $h\in L^1$ then $\|Sh\|_{L^1}\le2\|h\|_{L^1}$, giving \ref{LoneLone}. Also, $Sh$ is dominated by the decreasing function,
$$
\sum_{j\in\mathbb Z} \frac{\chi_{(0,2^{j+1})}}{2^j-2^{j-1}}\int_{2^{j-1}}^{2^j}|h|
$$
and so is its least decreasing majorant, $\2{Sh}$. Therefore, if $h\in L^1$,
$$
\|Sh\|_{\2{L^1}}=\|\2{Sh}\|_{L^1}\le\sum_{j\in\mathbb Z}\frac{2^{j+1}}{2^j-2^{j-1}}\int_{2^{j-1}}^{2^j}|h|=4\|h\|_{L^1}.
$$
This proves \ref{Lone2Lone}. If $h\in(L^\infty)^o$, then for each $j\in\mathbb Z$, the formula (\ref{DooNorm}) shows that
\[
\Big|\frac1{2^j-2^{j-1}}\int_{2^{j-1}}^{2^j}h\Big|\le \frac2{2^j}\int_0^{2^j}|h|\le2\|h\|_{(L^\infty)^o}.
\]
Thus $\|Sh\|_{L^\infty}\le2\|h\|_{(L^\infty)^o}$, giving \ref{DooLoo}.
\end{proof}

The remainder of the section is divided into two parts. Operators related to the couple $(\2{L^1},L^\infty)$ are considered first and operators related to $(L^1,(L^\infty)^o)$ follow.

\begin{theorem}\label{2Op} Suppose $f,g\in \2{L^1}+L^\infty$, $h\in L^1+L^\infty$, and
\[
K(t,g;\2{L^1},L^\infty)\le K(t,h;L^1,L^\infty)\le K(t,f;\2{L^1},L^\infty).
\]
\begin{enumerate}[leftmargin=1.85em, label=\rm{(\alph*)}]
\item\label{V1V2} There are linear operators $V_1,V_2:(L^1,L^\infty)\to(L^1,L^\infty)$, each of norm at most $1$, such that $V_1\1f=h$ and $V_2h=\1g$.
\item\label{2I}
The identity map $I:(\2{L^1},L^\infty)\to(L^1,L^\infty)$ has norm at most $1$.
\item\label{M}
There is a linear operator $\overbar M:(L^1,L^\infty)\to(\2{L^1},L^\infty)$, of norm at most $4$, such that $\overbar M\1g=g$.
\item\label{W} There are linear operators $W_1,W_2,W_3:(\2{L^1},L^\infty)\to(\2{L^1},L^\infty)$, each of norm at most $1$, such that $W_1f=\1f$, $W_2\1f=\1g$, and $W_3\1g=g$.
\end{enumerate}\end{theorem}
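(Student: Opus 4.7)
For \ref{V1V2}, Proposition~\ref{Kfnls} rewrites the hypothesis as
\[
K(t,\1g;L^1,L^\infty)\le K(t,h;L^1,L^\infty)\le K(t,\1f;L^1,L^\infty),
\]
and two applications of the exact Calder\'on-Mityagin property of $(L^1,L^\infty)$ produce $V_1,V_2$ of norm at most one. For \ref{2I}, $|f|\le\1f$ gives $\|f\|_{L^1}\le\|\1f\|_{L^1}=\|f\|_{\2{L^1}}$, and the $L^\infty$ norms coincide.

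For \ref{M}, I would define $\overbar Mu:=(g/S(\1g))\,Su$ with the convention $0/0=0$. Since $\1g$ is non-negative decreasing, Lemma~\ref{S} yields $\1g\le S(\1g)$; combined with $|g|\le\1g$, the multiplier is bounded by $1$ pointwise, so $|\overbar Mu|\le|Su|$. Linearity of $S$ gives $\overbar M\1g=g$, and the endpoint norm bounds follow from parts \ref{LooLoo} and \ref{Lone2Lone} of Lemma~\ref{S} together with $\2{\overbar Mu}\le\2{Su}$.

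Part \ref{W} is the main content. The simplest case is $W_3u:=(g/\1g)u$ (with $0/0=0$): then $W_3\1g=g$, and $|W_3u|\le|u|$ yields contractivity on $L^\infty$ and, via $\2{W_3u}\le\1u$, on $\2{L^1}$. For $W_1f=\1f$, I would apply Hahn-Banach to the one-dimensional subspace $\mathbb Rf$ with dominating seminorm $p_x(u)=\essup_{y\ge x}|u(y)|=\1u(x)$ to obtain, for each $x$, a linear functional $\Lambda_x$ with $\Lambda_x(f)=\1f(x)$ and $|\Lambda_x(u)|\le\1u(x)$, and then set $W_1u(x):=\Lambda_x(u)$. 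This pointwise bound immediately gives contractivity on $L^\infty$, and since $\2{W_1u}(x)=\essup_{z\ge x}|\Lambda_z(u)|\le\essup_{z\ge x}\1u(z)=\1u(x)$, also on $\2{L^1}$.

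For $W_2\1f=\1g$, both sides are non-negative decreasing and the hypothesis becomes the Hardy-Littlewood-P\'olya majorization $\int_0^t\1g\le\int_0^t\1f$; I would construct $W_2$ as a doubly substochastic operator tailored to the monotone structure so that $\2{W_2u}$ is controlled by $\1u$, for instance via a right-averaging kernel when possible or a transport/substitution map otherwise. The main obstacle is achieving norm exactly one---the naive composition of \ref{2I}, a Calder\'on-Mityagin operator on $(L^1,L^\infty)$, and $\overbar M$ from \ref{M} only gives norm at most four---together with genuine measurability of the family $\{\Lambda_x\}$ in $x$; both points likely require concrete pointwise or kernel-level arguments exploiting the monotone structure of $\1f$ and $\1g$ rather than appeals to abstract existence results.
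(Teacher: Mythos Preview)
Your treatment of \ref{V1V2}, \ref{2I}, \ref{M}, and the operator $W_3$ in \ref{W} matches the paper's proof essentially line for line.

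The two points you flag as obstacles are exactly where the paper calls on outside tools. For $W_1$, the paper avoids your measurability problem entirely by using the vector-valued Hahn--Banach--Kantorovich theorem (Theorem~1.25 of \cite{AB}): since $\psi\mapsto\1\psi$ is a sublinear map from $\2{L^1}+L^\infty$ into the Dedekind complete vector lattice $\2{L^1}+L^\infty$, the linear map $\alpha f\mapsto\alpha\1f$ on $\mathbb{R}f$ extends in one stroke to a linear operator $W_1$ on the whole space satisfying $W_1\psi\le\1\psi$, hence $|W_1\psi|\le\1\psi$. No family $\{\Lambda_x\}$ is needed, so no measurable selection issue arises. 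Your pointwise approach would require choosing extensions coherently in $x$, and there is no evident way to do this.

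For $W_2$, the paper does not construct the operator directly but invokes Theorem~5B of Bennett--Sharpley \cite{BS1}, which provides a norm-one map $W_2:(L^1,L^\infty)\to(L^1,L^\infty)$ with $W_2\1f=\1g$ that is positive and takes non-negative decreasing functions to non-negative decreasing functions. From positivity one gets $|W_2\psi|\le W_2\1\psi$, and since $W_2\1\psi$ is decreasing it majorizes $\2{W_2\psi}$, giving $\|W_2\psi\|_{\2{L^1}}\le\|W_2\1\psi\|_{L^1}\le\|\1\psi\|_{L^1}=\|\psi\|_{\2{L^1}}$. So the key ingredient you were missing---a Calder\'on-type operator that respects the monotone cone---is precisely what \cite{BS1} supplies, and is what turns the naive norm-$4$ bound into norm $1$.
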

For reference, these operators may be represented schematically as follows.
\begin{equation}\label{2schema}
\lower 6ex\hbox{\includegraphics{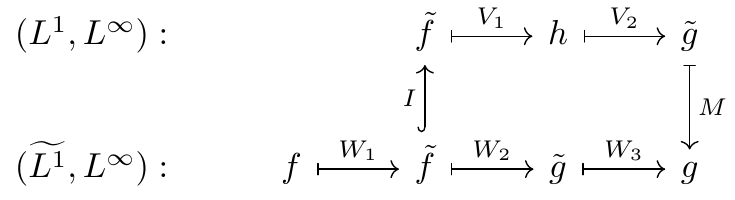}}
\end{equation}
See Definition \ref{Mdef}, below, for the relationship between $M$ and $\overbar M$.
\begin{proof} By Proposition \ref{Kfnls}, we have
\[
K(t,\1g;L^1,L^\infty)\le K(t,h;L^1,L^\infty)\le K(t,\1f;L^1,L^\infty).
\]
Thus Calder\'on's theorem provides $V_1$ and $V_2$ to prove \ref{V1V2}. Since $|\psi|\le\1\psi$, \ref{2I} is trivial. For \ref M we define $\overbar M:(L^1,L^\infty)\to(\2{L^1},L^\infty)$ by
$\overbar M\psi=(g/S\1g)S\psi$, where $S$ is the operator of Lemma \ref S. Since $|g|\le\1g\le S\1g$, it follows from Lemma \ref S, parts \ref{Lone2Lone} and \ref{LooLoo}, that the norm of $\overbar M$ is at most $4$. (If necessary, take $0/0=0$ in the definition of $\overbar M$.)

Now we turn to \ref W. Let $W_3\psi=(g/\1g)\psi$, which has norm at most $1$ since $|g/\1g|\le1$. For $W_2$ we apply Theorem 5B of \cite{BS1}, which shows that there is a map $W_2:(L^1,L^\infty)\to(L^1,L^\infty)$, of norm at most $1$, such that $W_2\1f=\1g$ and which maps non-negative functions to non-negative functions and maps non-negative, decreasing functions to non-negative, decreasing functions. If $\psi\in \2{L^1}$ then $\1\psi\in L^1$ and $\1\psi\pm \psi\ge0$ so $\pm W_2\psi\le W_2\1\psi$. It follows that $|W_2\psi|\le W_2\1\psi$. But $\1\psi$ is non-negative and decreasing so $W_2\1\psi$ is a decreasing majorant of $|W_2\psi|$. Therefore, the least decreasing majorant $\2{W_2\psi}$ of $|W_2\psi|$ is no greater than $W_2\1\psi$. We have,
\[
\|W_2\psi\|_\2{L^1}=\|\2{W_2\psi}\|_{L^1}\le\|W_2\1\psi\|_{L^1}\le\|\1\psi\|_{L^1}=\|\psi\|_\2{L^1}.
\]
This shows that the norm of $W_2$ on $\2{L^1}$ is at most $1$ so $W_2$ may be viewed as a map from $(\2{L^1},L^\infty)$ to $(\2{L^1},L^\infty)$ of norm at most one.

The existence of $W_1$ follows from the Hahn-Banach-Kantorovich theorem, see Theorem 1.25 of \cite{AB}. On the one-dimensional subspace $\mathbb R f$ of $\2{L^1}+L^\infty$ the linear operator $W_1(\alpha f)=\alpha\1f\in \2{L^1}+L^\infty$ satisfies $W_1\psi\le\1\psi$. But $\psi\mapsto\1\psi$ is a sublinear map from $\2{L^1}+L^\infty$ to $\2{L^1}+L^\infty$ so $W_1$ extends to a linear operator from $\2{L^1}+L^\infty$ to $\2{L^1}+L^\infty$, preserving the bound $W_1\psi\le \1\psi$. Now, $-W_1\psi=W_1(-\psi)\le\2{-\psi}=\1\psi$ so we have $|W_1\psi|\le\1\psi$. Therefore, $\|W_1\psi\|_{L^\infty}\le\|\1\psi\|_{L^\infty}\le\|\psi\|_{L^\infty}$ and $\|W_1\psi\|_\2{L^1}\le\|\1\psi\|_\2{L^1}=\|\1{\1\psi}\|_{L^1}=\|\1\psi\|_{L^1}=\|\psi\|_\2{L^1}$. This shows $W_1$ has norm at most $1$.
\end{proof}

We do not claim that the operator $\overbar M$, constructed above, has smallest possible norm. Since this norm will appear in subsequent results we make the following definition.

\begin{definition}\label{Mdef} Let $m$ be a constant such that for each $g\in\2{L^1}+L^\infty$, there is a linear operator $M:(L^1,L^\infty)\to(\2{L^1},L^\infty)$, of norm at most $m$, satisfying $M\1g=g$.
\end{definition}
Theorem \ref{2Op}\ref{M} shows that $m=4$ is such a constant, with $M=\overbar M$. Lemma \ref{exM}, below, shows that no $m<9/8$ can satisfy Definition \ref{Mdef}.

\begin{lemma}\label{exM} There exists a function $g\in \2{L^1}\cap L^\infty$ such that any linear operator $M:(L^1,L^\infty)\to(\2{L^1},L^\infty)$ satisfying $M\1g=g$ has norm at least $9/8$.
\end{lemma}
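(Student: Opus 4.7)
The plan is to construct a specific $g \in \2{L^1} \cap L^\infty$ and, for any linear operator $M$ with $M\1g = g$, to derive a lower bound $\|M\| \ge 9/8$ by combining linearity with the sharp pointwise estimate available for functions in $\2{L^1}$.

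I would try $g = \chi_I + c\,\chi_J$ for disjoint intervals $I, J \subset (0,\infty)$ with $J$ to the right of $I$ and $c \in (0,1)$ a parameter to be tuned. For this $g$, the majorant $\1g$ is a two-level decreasing step function of the form $\1g = \chi_A + c\,\chi_B$, where $A, B$ are the two level sets. Given any $M$ with $M\1g = g$ and $\|M\| \le C$, setting $h_A := M\chi_A$ and $h_B := M\chi_B$, linearity gives
\[
h_A + c\, h_B = g,
\]
with $\|h_A\|_{L^\infty}, \|h_B\|_{L^\infty} \le C$ and $\|h_A\|_{\2{L^1}} \le |A|\,C$, $\|h_B\|_{\2{L^1}} \le |B|\,C$.

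The crucial additional ingredient is the sharp pointwise estimate $|h(x)| \le \1h(x) \le \|h\|_{\2{L^1}}/x$ valid for any $h \in \2{L^1}$ and $x > 0$, a consequence of the fact that $\1h$ is non-negative and decreasing (so $\|h\|_{\2{L^1}} \ge x\, \1h(x)$). Applied to $h_A, h_B$, this yields pointwise bounds that are particularly strong near the right end of the support of $g$. Substituting into the identity $h_A(x) + c\,h_B(x) = g(x)$ at well-chosen points, and sharpening by applying $M$ to indicators of finer subintervals of $A, B$, produces a system of inequalities in $C$; after optimizing the parameters of $g$ (i.e., $c$ and the positions and lengths of $I, J$), the numerical bound $C \ge 9/8$ emerges.

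The main obstacle is that the most direct inequalities, those obtained from the decomposition $\1g = \chi_A + c\,\chi_B$ alone, yield only $C \ge 1$. Extracting the sharper bound $9/8$ requires combining several pointwise and $L^\infty$ constraints on $h_A$ and $h_B$ simultaneously and balancing them against one another; the precise numerical value $9/8$ should emerge from the resulting linear programming problem once the parameters of $g$ are tuned to maximize the obstruction.
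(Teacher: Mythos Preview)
Your framework matches the paper's: take a two-step decreasing function (the paper uses $g=2\chi_{[0,1)}+\chi_{[1,3)}$, which is your $\chi_I+c\chi_J$ with $I=[0,1)$, $J=[1,3)$, $c=\tfrac12$, rescaled by $2$), decompose $M\1g=g$ via linearity into constraints on $Ma$ and $Mb$, and combine an $L^\infty$ bound with $\2{L^1}$ bounds. The paper also works with integrals over $[0,1)$, $[1,3)$, $[0,3)$ rather than pointwise values.

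The gap is in the estimation tool you single out. The pointwise inequality $|h(x)|\le\|h\|_{\2{L^1}}/x$ is too weak to reach $9/8$. With the paper's $g$, your bound gives $\int_1^3|Mb|\le\int_1^3 2C/x\,dx=2C\ln 3$, and feeding this into the system
\[
\int_0^1 Ma\ge 2-C,\qquad \int_1^3 Ma\ge 1-\tfrac12\int_1^3 Mb,\qquad \int_0^3 Ma\le C
\]
yields only $C\ge 3/(2+\ln 3)<1$, worse than the trivial bound. No choice of the parameters $c$, $|I|$, $|J|$ rescues this, and passing to finer subintervals does not help because the resulting images are not tied to $g$ by any identity.

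What the paper uses instead is the averaging inequality for decreasing functions: since $\2{Mb}$ is non-negative and decreasing, its average over $[1,3]$ is at most its average over $[0,3]$, so
\[
\tfrac12\int_1^3\2{Mb}\le\tfrac13\int_0^3\2{Mb}\le\tfrac13\|Mb\|_{\2{L^1}}\le\tfrac{2}{3}C.
\]
This gives $\int_1^3 Ma\ge 1-\tfrac23C$, and now the three displayed inequalities combine to $3-\tfrac53C\le C$, i.e.\ $C\ge 9/8$. The missing idea is precisely this Chebyshev-type comparison of averages of $\2{Mb}$ over nested intervals; it is strictly sharper than integrating the $1/x$ bound and is what produces the constant $9/8$.
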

\begin{proof}  Let $a=\chi_{[0,1)}$, $b=\chi_{[1,3)}$, and set $g=2a+b$. Clearly $g\in \2{L^1}\cap L^\infty$. Suppose $M:(L^1,L^\infty)\to(\2{L^1},L^\infty)$ has norm $C$ and satisfies $M\1g=g$. Since $g$ is decreasing, $g=\1g$ so $Mg=g$. Now $g=2$ on $(0,1)$, so
\[
2-\int_0^1Ma=\int_0^1Mg-Ma=\int_0^1M(a+b)\le\|M(a+b)\|_{L^\infty}\le C\|a+b\|_{L^\infty}=C.
\]
But $g=1$ on $(1,3)$, $Mb\le\2{Mb}$, and $\2{Mb}$ is non-negative and decreasing, so
\[
1-\int_1^3Ma=\frac12\int_1^3Mg-2Ma\le\frac12\int_1^3\2{Mb}
\le\frac13\int_0^3\2{Mb}\le\frac C3\int_0^\infty b=\frac23C.
\]
It follows that
\[
3-\frac53C\le\int_0^1Ma+\int_1^3Ma=\int_0^3Ma\le\int_0^3\2{Ma}
\le C\int_0^\infty a=C.
\]
Thus, $C\ge9/8$.
\end{proof}

A similar collection of operators can be found for the couple $(L^1,(L^\infty)^o)$.

\begin{theorem}\label{Op} Suppose $f,g\in L^1+(L^\infty)^o$, $h\in L^1+L^\infty$, and
\[
K(t,g;L^1,(L^\infty)^o)\le K(t,h;L^1,L^\infty)\le K(t,f;L^1,(L^\infty)^o).
\]
\begin{enumerate}[leftmargin=1.85em, label=\rm{(\alph*)}]
\item\label{V3V4} There are linear operators $V_3,V_4:(L^1,L^\infty)\to(L^1,L^\infty)$, each of norm at most $1$, such that $V_3f^o=h$ and $V_4h=g^o$.
\item\label{I} The identity map $I:(L^1,L^\infty)\to(L^1,(L^\infty)^o)$, has norm at most $1$.
\item\label{N} There is a linear operator $\overbar N:(L^1,(L^\infty)^o)\to(L^1,L^\infty)$, of norm at most $2$, such that $\overbar Nf=f^o$.
\item\label{U} There are linear operators $U_1,U_2,U_3:(L^1,(L^\infty)^o)\to(L^1,(L^\infty)^o)$, each of norm at most $1$, such that $U_1f=f^o$, $U_2f^o=g^o$, and $U_3g^o=g$.
\end{enumerate}
\end{theorem}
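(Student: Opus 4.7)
The plan is to mirror the proof of Theorem~\ref{2Op}. Part~\ref{V3V4} is immediate: Proposition~\ref{Kfnls} recasts the hypothesis as $K(t,g^o;L^1,L^\infty)\le K(t,h;L^1,L^\infty)\le K(t,f^o;L^1,L^\infty)$, whence Calder\'on's theorem supplies contractions $V_3$ and $V_4$ on $(L^1,L^\infty)$ realising $f^o\mapsto h$ and $h\mapsto g^o$. Part~\ref{I} is immediate from~\eqref{DooNorm}. The heart of the remaining argument is a level-interval averaging operator. Decompose $(0,\infty)$ as the disjoint union of the open level intervals $I_\alpha=(a_\alpha,b_\alpha)$ of $f$ and their complement $E$; on $I_\alpha$, $f^o$ equals the constant $c_\alpha=|I_\alpha|^{-1}\int_{I_\alpha}|f|$, while on $E$, $f^o=|f|$. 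Set
\[
T_f\psi=\sum_\alpha\frac{\chi_{I_\alpha}}{|I_\alpha|}\int_{I_\alpha}(\operatorname{sign}f)\,\psi\,+\,(\operatorname{sign}f)\,\psi\,\chi_E.
\]
Then $T_f$ is linear, $T_ff=f^o$, and $\|T_f\psi\|_{L^1}\le\|\psi\|_{L^1}$ is immediate. The key bound $\|T_f\psi\|_{(L^\infty)^o}\le\|\psi\|_{(L^\infty)^o}$ rests on the defining property of a level interval, $\int_{a_\alpha}^x|f|<c_\alpha(x-a_\alpha)$ for $x\in I_\alpha$, together with the elementary inequalities $\int_0^{a_\alpha}|\psi|\le a_\alpha\|\psi\|_{(L^\infty)^o}$ and $\int_0^{b_\alpha}|\psi|\le b_\alpha\|\psi\|_{(L^\infty)^o}$; a short convex-combination argument then yields $\int_0^x|T_f\psi|\le x\|\psi\|_{(L^\infty)^o}$ when $x$ lies inside a level interval.

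For~\ref{N}, put $\overbar N\psi=(f^o/Sf^o)\,S(T_f\psi)$, with the usual convention $0/0=0$. Since $f^o$ is nonnegative and decreasing, Lemma~\ref{S} yields $Sf^o\ge f^o$ pointwise, so $|f^o/Sf^o|\le 1$; hence $\overbar Nf=(f^o/Sf^o)\,Sf^o=f^o$, and combining Lemma~\ref{S}\ref{LoneLone},\ref{DooLoo} with the two norm-$\le 1$ bounds for $T_f$ gives $\|\overbar N\psi\|_{L^1}\le 2\|\psi\|_{L^1}$ and $\|\overbar N\psi\|_{L^\infty}\le 2\|\psi\|_{(L^\infty)^o}$, as required.

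For~\ref{U}, take $U_1=T_f$, which satisfies $U_1f=f^o$ with norm at most $1$. Define $U_3$ by the parallel recipe built from $g$, but distributing mass on level intervals according to $g$ rather than uniformly:
\[
U_3\psi=\sum_\alpha\frac{g\,\chi_{I_\alpha}}{c_\alpha|I_\alpha|}\int_{I_\alpha}\psi\,+\,(\operatorname{sign}g)\,\psi\,\chi_E,
\]
with $\{I_\alpha\}$ now the level intervals of $g$; a direct computation gives $U_3g^o=g$, the $L^1$-bound is routine, and the defining level-interval inequality applied to $g$ yields $\|U_3\psi\|_{(L^\infty)^o}\le\|\psi\|_{(L^\infty)^o}$ by the same convex-combination argument used for $T_f$. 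For $U_2$, apply Theorem~5B of~\cite{BS1} to the pair $f^o,g^o$ (the ordering of whose $K$-functionals on $(L^1,L^\infty)$ is supplied by Proposition~\ref{Kfnls}) to obtain a contraction on $(L^1,L^\infty)$ that sends $f^o$ to $g^o$ and preserves nonnegative decreasing functions; an argument paralleling that used for $W_2$ then shows this operator restricts to a norm-$\le 1$ map on $(L^1,(L^\infty)^o)$.

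The main obstacle is the $(L^\infty)^o$-bound for $T_f$ and $U_3$, which requires both the level-interval inequality and a careful accounting of $\int_0^x|T_f\psi|$ and $\int_0^x|U_3\psi|$ when $x$ lies interior to a level interval. With these in hand, the remaining estimates follow the template of Theorem~\ref{2Op}.
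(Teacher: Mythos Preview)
Your treatment of parts \ref{V3V4}, \ref{I}, and \ref{N}, and your explicit constructions of $U_1=T_f$ and $U_3$, are correct and essentially reproduce (with more detail) what the paper obtains by citing Lemmas~3.5 and~3.6 of \cite{MS}. One small remark: the level-interval inequality $\int_{a_\alpha}^x|f|\le c_\alpha(x-a_\alpha)$ is not actually needed for the $(L^\infty)^o$ bound on $T_f$---the affine interpolation between the endpoints $a_\alpha$ and $b_\alpha$ already suffices---but it \emph{is} essential for $U_3$, exactly as you use it.

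There is, however, a genuine gap in your argument for $U_2$. The parallel with $W_2$ breaks down in two places. First, the Bennett--Sharpley operator from Theorem~5B of \cite{BS1} acts on $L^1+L^\infty$, and in the $W_2$ setting this sufficed because $\2{L^1}\subseteq L^1$. Here the inclusion runs the wrong way: $(L^\infty)^o\supsetneq L^\infty$, so the operator is not a priori defined on all of $L^1+(L^\infty)^o$. Second, the pointwise step in the $W_2$ argument used $|\psi|\le\1\psi$; the analogous inequality $|\psi|\le\psi^o$ is \emph{false} (e.g.\ $\psi=2\chi_{[1,2)}$ has $\psi^o=\chi_{[0,2)}$), so positivity of the operator cannot be leveraged in the same way. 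The paper avoids both obstacles by invoking Theorem~3.8 of \cite{MS}, which is the exact Calder\'on--Mityagin property of $(L^1,(L^\infty)^o)$ itself: since $(f^o)^o=f^o$ and $(g^o)^o=g^o$, Proposition~\ref{Kfnls} gives $K(t,g^o;L^1,(L^\infty)^o)\le K(t,f^o;L^1,(L^\infty)^o)$, and that theorem then supplies a norm-one $U_2$ on the correct couple $(L^1,(L^\infty)^o)$ with $U_2f^o=g^o$.
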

For reference, these operators may be represented schematically as follows.
\begin{equation}\label{schema}
\lower 6ex\hbox{\includegraphics{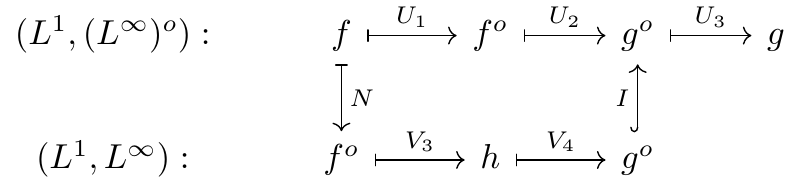}}
\end{equation}
See Definition \ref{Ndef}, below, for the relationship between $N$ and $\overbar N$.
\begin{proof} By Proposition \ref{Kfnls}, we have
\[
K(t,g^o;L^1,L^\infty)\le K(t,h;L^1,L^\infty)\le K(t,f^o;L^1,L^\infty).
\]
So Calder\'on's theorem provides $V_3$ and $V_4$ to prove \ref{V3V4}. Part \ref{I} follows from (\ref{DooNorm}). The operators $U_1$ and $U_3$ were constructed in Lemmas 3.5 and 3.6 of \cite{MS}, respectively. Since $(f^o)^o=f^o$ and $(g^o)^o=g^o$ we have,
\[
K(t,g^o;L^1,(L^\infty)^o)\le K(t,f^o;L^1,(L^\infty)^o),
\]
so Theorem 3.8 of \cite{MS} gives the operator $U_2$.

For \ref N we define $\overbar N:(L^1,(L^\infty)^o)\to(L^1,L^\infty)$ by
$\overbar N\psi=(f^o/Sf^o)SU_1\psi$, where $S$ is the operator of Lemma \ref S. Since $f^o\le Sf^o$ and $U_1$ has norm at most $1$, it follows from Lemma \ref S, parts \ref{LoneLone} and \ref{DooLoo}, that the norm of $\overbar N$ is at most $2$. (If necessary, take $0/0=0$ in the definition of $\overbar N$.)
\end{proof}

We do not claim that the operator $\overbar N$, constructed above, has smallest possible norm. Since this norm will affect our subsequent results we make the following definition.

\begin{definition}\label{Ndef} Let $n$ be a constant such that for each $f\in L^1+(L^\infty)^o$, there is a linear operator $N:(L^1,(L^\infty)^o)\to(L^1,L^\infty)$, of norm at most $n$, satisfying $Nf= f^o$.
\end{definition}
Theorem \ref{Op}\ref{N} shows that $n=2$ is such a constant, with $N=\overbar N$. Lemma \ref{exN}, below, shows that no $n<9/8$ satisfies Definition \ref{Ndef}.

\begin{lemma}\label{exN}  There exists a function $f\in L^1\cap L^\infty$ such that any linear operator $N:(L^1,(L^\infty)^o)\to(L^1,L^\infty)$ satisfying $Nf=f^o$ has norm at least $9/8$.
\end{lemma}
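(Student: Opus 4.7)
The plan is to follow the structure of Lemma \ref{exM} verbatim, using the same test function but replacing the decreasing-majorant estimates with direct computations of the $(L^\infty)^o$-norm. Since $f^o=f$ for any non-negative decreasing $f$, the function that worked in the $\widetilde{\cdot}$ setting should work here as well.

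Concretely, I take $a=\chi_{[0,1)}$, $b=\chi_{[1,3)}$, and $f=2a+b\in L^1\cap L^\infty$. As $f$ is decreasing, $f^o=f$. Given $N\colon(L^1,(L^\infty)^o)\to(L^1,L^\infty)$ of norm $C$ with $Nf=f^o$, the identity $2Na+Nb=2a+b$ holds pointwise. Integrating this identity on $(0,1)$, where $f=2$, and on $(1,3)$, where $f=1$, yields
\[
2-\int_0^1 Na=\int_0^1 N(a+b)\le \|N(a+b)\|_{L^\infty}\le C\|a+b\|_{(L^\infty)^o}=C
\]
and
\[
1-\int_1^3 Na=\frac12\int_1^3 Nb\le \|Nb\|_{L^\infty}\le C\|b\|_{(L^\infty)^o}=\frac{2C}{3},
\]
where both endpoint norms are read off from (\ref{DooNorm}). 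The key observation is that $\sup_x\frac1x\int_0^x b$ is attained at $x=3$, giving $\|b\|_{(L^\infty)^o}=2/3$ even though $\|b\|_{L^\infty}=1$. This factor $2/3$ plays exactly the role that was played in Lemma \ref{exM} by the fact that a non-negative decreasing function on $[0,3]$ places at most $2/3$ of its integral on $[1,3]$. Adding the two inequalities and bounding $\int_0^3 Na\le \|Na\|_{L^1}\le C\|a\|_{L^1}=C$ produces $3-5C/3\le C$, so $C\ge 9/8$.

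The main subtlety, and the only substantive departure from the parallel proof of Lemma \ref{exM}, is recognizing that the $(L^\infty)^o$-norm of $b$ is strictly smaller than its $L^\infty$-norm; it is this gap that supplies the savings needed to recover the $9/8$ bound in the absence of a decreasing-majorant device on the target side.
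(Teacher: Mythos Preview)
Your proof is correct and uses the same test function and the same three norm estimates as the paper: $\|a+b\|_{(L^\infty)^o}=1$, $\|b\|_{(L^\infty)^o}=2/3$, and $\|a\|_{L^1}=1$. The only difference is cosmetic: you combine the inequalities exactly as in Lemma~\ref{exM} (two displayed bounds plus the $L^1$ bound on $Na$, then add), whereas the paper first isolates $\int_1^3 Na\le 2C-2$ and then substitutes into $\int_1^3 N(2a+b)=2$; both arrangements yield $C\ge 9/8$.
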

\begin{proof} Let $a=\chi_{[0,1)}$, $b=\chi_{[1,3)}$, and set $f=2a+b$. Clearly, $f\in L^1\cap L^\infty$. Suppose $N:(L^1,(L^\infty)^o)\to(L^1,L^\infty)$ has norm $C$ and satisfies $Nf=f^o$. Since $f$ is decreasing, $f=f^o$ so $Nf=f$, that is, $N(2a+b)=2a+b$. Now $\|a+b\|_{(L^\infty)^o}=1$ and $\|a\|_{L^1}=1$, so
\[
\int_0^1N(a+b)+\int_0^3Na\le\|N(a+b)\|_{L^\infty}+\|Na\|_{L^1}
\le C\|a+b\|_{(L^\infty)^o}+C\|a\|_{L^1}=2C,
\]
and we have
\[
\int_1^3Na=\int_0^1N(a+b)+\int_0^3Na -\int_0^1N(2a+b)\le 2C-2.
\]
But, $\|Nb\|_{L^\infty}\le C\|b\|_{(L^\infty)^o}=(2/3)C$, by (\ref{DooNorm}), so
\[
2=\int_1^3N(2a+b)=2\int_1^3Na+\int_1^3Nb\le2(2C-2)+2(2/3)C=(16/3)C-4.
\]
Thus, $C\ge9/8$.
\end{proof}

\section{Concerning Calder\'on-Mityagin Couples and Relative Calder\'on-Mityagin Pairs}\label{CCaRCP}

The authors showed, in \cite{MS}, that $(L^1,(L^\infty)^o)$ is an exact Calder\'on-Mityagin couple and, more recently, Le\'snik, in \cite{L}, showed that $(\2{L^1},L^\infty)$ is also a Calder\'on-Mityagin couple. The first part of Theorem \ref{C2LL} provides an alternative proof of Le\'snik's result, improving his $1+\varepsilon$ constant to give exactness. More importantly, the pairs  $[(\2{L^1},L^\infty),(L^1,L^\infty)]$, $[(L^1,L^\infty),(\2{L^1},L^\infty)]$, $[(L^1,(L^\infty)^o),(L^1,L^\infty)]$, and $[(L^1,L^\infty),(L^1,(L^\infty)^o)]$ are shown to be relative Calder\'on-Mityagin pairs. See Theorems \ref{C2LL} and \ref{CLL}. 

Theorems \ref{2iff} and \ref{iff} set out the close correspondence between the class of all interpolation spaces for the couples $(L^1,L^\infty)$, $(\2{L^1},L^\infty)$, and $(L^1,(L^\infty)^o)$. Theorems \ref{2CC} and \ref{CC} take this correspondence further, showing that the relative Calder\'on-Mityagin pairs within $\Int_1(L^1,L^\infty)$ correspond exactly to the relative Calder\'on-Mityagin pairs within $\Int_1(\2{L^1},L^\infty)$ and also to those within $\Int_1(L^1,(L^\infty)^o)$.

Results for the couple $(\2{L^1},L^\infty)$ will be given first. Analogous results for the couple $(L^1,(L^\infty)^o)$ follow.

\begin{theorem}\label{C2LL} Let $m$ be a constant satisfying Definition {\rm \ref{Mdef}}. Then,
\begin{enumerate}[leftmargin=1.85em, label=\rm{(\alph*)}]
\item\label{2ECC} $(\2{L^1},L^\infty)$ is an exact Calder\'on-Mityagin couple.
\item\label{2ERC} $(\2{L^1},L^\infty)$ and $(L^1,L^\infty)$ form an exact relative Calder\'on-Mityagin pair.
\item\label{2URC} $(L^1,L^\infty)$ and $(\2{L^1},L^\infty)$ form an $m$-uniform relative Calder\'on-Mityagin pair.
\item\label{2notERC} $(L^1,L^\infty)$ and $(\2{L^1},L^\infty)$ do not form an exact relative Calder\'on-Mityagin pair.
\end{enumerate}
\end{theorem}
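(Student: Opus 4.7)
The plan is to obtain parts \ref{2ECC}--\ref{2URC} by composing linear operators supplied by Theorem \ref{2Op}, and to deduce the negative assertion \ref{2notERC} from Lemma \ref{exM}. In each positive part the given $K$-functional inequality is first translated, via Proposition \ref{Kfnls}, into an inequality of $K$-functionals on the single couple $(L^1,L^\infty)$; then an auxiliary function is chosen so that the hypothesis of Theorem \ref{2Op} is satisfied; and the required operator is built as a short composition.

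For \ref{2ECC}, given $K(t,g;\2{L^1},L^\infty)\le K(t,f;\2{L^1},L^\infty)$, I would set $h=\1f$, so that $K(t,h;L^1,L^\infty)=K(t,f;\2{L^1},L^\infty)$ by Proposition \ref{Kfnls}, bringing the hypothesis of Theorem \ref{2Op} into force. Part \ref{W} then supplies contractions $W_1,W_2,W_3$ on $(\2{L^1},L^\infty)$ satisfying $W_1f=\1f$, $W_2\1f=\1g$, and $W_3\1g=g$, so the composition $T=W_3W_2W_1$ is a contraction with $Tf=g$.

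For \ref{2ERC}, given $f\in\2{L^1}+L^\infty$ and $h\in L^1+L^\infty$ with $K(t,h;L^1,L^\infty)\le K(t,f;\2{L^1},L^\infty)$, I would take $g=0$ so that the lower half of the chain in Theorem \ref{2Op} is trivial; part \ref{V1V2} then provides $V_1:(L^1,L^\infty)\to(L^1,L^\infty)$ of norm at most one with $V_1\1f=h$. The composition $V_1\circ I\circ W_1$, where $I$ is the norm-one inclusion of part \ref{2I} and $W_1$ is as above, is a norm-one map $(\2{L^1},L^\infty)\to(L^1,L^\infty)$ sending $f$ to $h$. For \ref{2URC}, Proposition \ref{Kfnls} reduces the hypothesis $K(t,g;\2{L^1},L^\infty)\le K(t,h;L^1,L^\infty)$ to $K(t,\1g;L^1,L^\infty)\le K(t,h;L^1,L^\infty)$, whence Calder\'on's theorem yields a contraction $V_2$ on $(L^1,L^\infty)$ with $V_2h=\1g$; composing with the operator $M$ from Definition \ref{Mdef} gives a map $M\circ V_2:(L^1,L^\infty)\to(\2{L^1},L^\infty)$ of norm at most $m$ taking $h$ to $g$.

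The negative assertion \ref{2notERC} is essentially the content of Lemma \ref{exM}. Taking $g=2\chi_{[0,1)}+\chi_{[1,3)}$, the function of that lemma, $g$ is decreasing so $\1g=g$; setting $h=g$ one has $K(t,g;\2{L^1},L^\infty)=K(t,\1g;L^1,L^\infty)=K(t,h;L^1,L^\infty)$ by Proposition \ref{Kfnls}. An exact version of \ref{2URC} would thus supply a contraction $T:(L^1,L^\infty)\to(\2{L^1},L^\infty)$ with $Th=g$, equivalently $T\1g=g$, in direct contradiction to Lemma \ref{exM}. I expect the main care in writing up the proof to lie not in \ref{2notERC}, which is immediate, but in the bookkeeping of \ref{2ECC}--\ref{2URC}: keeping track of which operator of Theorem \ref{2Op} lives on which couple, so that each composition is defined on the correct pair and the norm estimates multiply correctly.
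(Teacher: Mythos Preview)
Your proposal is correct and follows essentially the same route as the paper: the compositions $W_3W_2W_1$, $V_1IW_1$, and $MV_2$ are exactly the operators used there for \ref{2ECC}, \ref{2ERC}, and \ref{2URC}, and the paper likewise deduces \ref{2notERC} directly from Lemma \ref{exM}. Your spelled-out derivation of \ref{2notERC} (choosing $h=g=\1g$ so the $K$-functional inequality holds with equality, then invoking the $9/8$ lower bound) is a correct unpacking of what the paper leaves implicit.
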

\begin{proof} Operator names refer to those constructed in Theorem \ref{2Op}. Refer to the diagram (\ref{2schema}).

To prove \ref{2ECC}, suppose $f,g\in\2{L^1}+L^\infty$ satisfy $K(t,g;\2{L^1},L^\infty)\le K(t,f;\2{L^1},L^\infty)$ for all $t>0$, and let $T=W_3W_2W_1$. All three have norm at most $1$ so $T:(\2{L^1},L^\infty)\to(\2{L^1},L^\infty)$ has norm at most $1$ as well. Also $Tf=W_3W_2W_1f=W_3W_2\1f=W_3\1g=g$.

For \ref{2ERC}, suppose $f\in\2{L^1}+L^\infty$ and $h\in L^1+L^\infty$ satisfy $K(t,h;L^1,L^\infty)\le K(t,f;\2{L^1},L^\infty)$ for all $t>0$, and let $T=V_1IW_1$. All three have norm at most $1$ so $T:(\2{L^1},L^\infty)\to(L^1,L^\infty)$ has norm at most $1$ as well. Also, $Tf=V_1IW_1f=V_1I\1f=V_1\1f=h$.

For \ref{2URC}, suppose $h\in L^1+L^\infty$ and $g\in\2{L^1}+L^\infty$ satisfy $K(t,g;\2{L^1},L^\infty)\le K(t,h;L^1,L^\infty)$ for all $t>0$, and let $T=MV_2$, where $M$ comes from Definition \ref{Mdef}. The operator $V_2$ has norm at most one and $M$ has norm at most $m$. Thus $T:(L^1,L^\infty)\to(\2{L^1},L^\infty)$ has norm at most $m$. Also, $Th=MV_2h=M\1g=g$.

Lemma \ref{exM} proves \ref{2notERC}.
\end{proof}

The next lemma will give control over the constants in subsequent results.

\begin{lemma}\label{Kdiv} The $K$-divisibility constant for the couple $(\2{L^1},L^\infty)$ is $1$.
\end{lemma}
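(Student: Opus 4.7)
The plan is to reduce the question to the known $K$-divisibility constant $1$ of $(L^1,L^\infty)$, using Proposition \ref{Kfnls} to rewrite the $K$-functional as $K(t,f;\2{L^1},L^\infty)=\int_0^t\1f(s)\,ds$, valid because $\1f$ is non-negative and decreasing. Given $f\in\2{L^1}+L^\infty$ with $K(\cdot,f;\2{L^1},L^\infty)\le\sum_{j\ge1}\varphi_j$ where each $\varphi_j\ge0$ is concave on $(0,\infty)$, this rewriting turns the hypothesis into $\int_0^t\1f(s)\,ds\le\sum_j\varphi_j(t)$, with $\1f$ locally integrable on $(0,\infty)$.

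The central step will be to produce a decomposition $\1f=\sum_j h_j$ in which each $h_j\ge0$ is \emph{decreasing} and $\int_0^t h_j\le\varphi_j(t)$. I would obtain this by applying the concave-function form of the Brudny\u\i--Krugljak $K$-divisibility theorem to $F(t)=\int_0^t\1f$: since $F$ is concave, non-decreasing, vanishes at $0^+$, and lies below $\sum\varphi_j$, the fact that $(L^1,L^\infty)$ has $K$-divisibility constant $1$ produces $F=\sum F_j$ with each $F_j$ concave non-negative and $F_j\le\varphi_j$. After subtracting the non-negative initial values $F_j(0^+)$ (legitimate since $F(0^+)=0$ and only sharpens the bound $F_j\le\varphi_j$), the derivatives $h_j=F_j'$ are non-negative decreasing, sum a.e.\ to $\1f$ by termwise differentiation of series of monotone functions, and satisfy $\int_0^t h_j=F_j(t)\le\varphi_j(t)$.

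Having these $h_j$, I would transfer the decomposition back to $f$ by setting
\[
f_j=\frac{h_j}{\1f}\,f,\qquad\text{with the convention }0/0=0.
\]
On $\{\1f>0\}$ the $f_j$ sum to $f$, while on $\{\1f=0\}$ the inequality $|f|\le\1f$ forces $f=0$ and each $f_j=0$, so $\sum_j f_j=f$ a.e. The same inequality yields $|f_j|\le h_j$, and since $h_j$ is itself a non-negative decreasing majorant of $|f_j|$, the least decreasing majorant satisfies $\2{f_j}\le h_j$, giving
\[
K(t,f_j;\2{L^1},L^\infty)=\int_0^t\2{f_j}(s)\,ds\le\int_0^t h_j(s)\,ds\le\varphi_j(t).
\]

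The main obstacle is the central step. A bare application of $K$-divisibility for $(L^1,L^\infty)$ to $\1f$ only returns pieces $g_j$ with $\int_0^t g_j^*\le\varphi_j$, and if the $g_j$ are not themselves decreasing then $\int_0^t\2{g_j}$ may strictly exceed $\int_0^t g_j^*$ (take $g_j=\chi_{[1,2]}$), so the transfer to $\2{L^1}$ breaks down. Passing through the concave decomposition $F=\sum F_j$ is exactly what forces the $h_j$ to be decreasing, and hence equal to their own least decreasing majorants.
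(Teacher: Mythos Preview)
Your approach is genuinely different from the paper's, and the overall strategy---decompose $\1f$ into decreasing pieces $h_j$ with $\int_0^t h_j\le\varphi_j$, then transfer via the multiplicative splitting $f_j=(h_j/\1f)f$---is sound \emph{once you have such pieces}. The gap is in the central step. You claim that ``$K$-divisibility constant $1$ for $(L^1,L^\infty)$ produces $F=\sum F_j$ with each $F_j$ concave, $F_j\le\varphi_j$.'' But the standard statement of $K$-divisibility only gives a decomposition of the \emph{element}: $\1f=\sum x_j$ in $L^1+L^\infty$ with $K(\cdot,x_j;L^1,L^\infty)\le\varphi_j$. Setting $F_j(t)=\int_0^t x_j^*$ yields $F_j\le\varphi_j$, but only $\sum F_j\ge F$ by subadditivity of the $K$-functional, not equality; and, as your own last paragraph observes, the $x_j$ need not be decreasing. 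So neither ``$\sum h_j=\1f$ with $h_j$ decreasing'' nor the pointwise bound $\sum h_j\ge\1f$ needed for the multiplicative trick is delivered by the result you cite. (A fix is available: with $h_j=x_j^*$ one has $\sum h_j\ge(\sum|x_j|)^*\ge\1f$ pointwise, so putting $H=\sum h_j$ and $f_j=(h_j/H)f$ gives $|f_j|\le h_j$ and $\sum f_j=f$; but this is not what you wrote.) A second, minor gap: you verify $\sum f_j=f$ only almost everywhere, whereas $K$-divisibility requires convergence in $\2{L^1}+L^\infty$.

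For comparison, the paper avoids the exact-decomposition issue altogether. After trimming via $\varphi_j\mapsto\min(K,\varphi_j)$ to force $\varphi_j(0^+)=0$, it simply sets $f_j=\varphi_j'$ and $f=\sum f_j$; here $f$ may strictly dominate $\1g$, but that is harmless because $K(\cdot,g;\2{L^1},L^\infty)\le K(\cdot,f;\2{L^1},L^\infty)$, so the operators $W_2,W_3$ of Theorem~\ref{2Op}\ref{W} (with $W_2$ coming from the Bennett--Sharpley positivity/monotonicity-preserving version of Calder\'on's theorem) give a norm-one map $W_3W_2$ sending $f$ to $g$, and $g_j=W_3W_2f_j$ inherits $K(\cdot,g_j)\le K(\cdot,f_j)=\varphi_j$. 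Thus the paper's argument is operator-based and self-contained within the paper, while yours is a direct splitting that would become correct once the decreasing decomposition is properly justified.
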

\begin{proof} Suppose $g\in \2{L^1}+L^\infty$, $\{\varphi_j\}$ is a sequence of non-negative, concave functions such that $\sum_{j=0}^\infty\varphi_j(1)<\infty$, and
\[
K(t,g;\2{L^1},L^\infty)\le \sum_{j=1}^\infty\varphi_j(t),\quad t>0.
\]
Since $\int_0^t\1g=K(t,g;\2{L^1},L^\infty)<\infty$, the convex function $K=K(\cdot,g;\2{L^1},L^\infty)$ satisfies $K(0+)=0$. Replacing $\varphi_j$ by  $\min(K,\varphi_j)$ we may assume with no loss of generality that $\varphi_j(0+)=0$. Now, taking $f_j$ to be the derivative of $\varphi_j$ (which exists almost everywhere) we have $\varphi_j(t)=\int_0^t f_j$ for $t>0$. Note that a non-negative, concave function on $(0,\infty)$ is necessarily increasing, so each $f_j$ is non-negative and decreasing. Set $f=\sum_{j=0}^\infty f_j$ and observe that for each $J$, 
\[
f-\sum_{j=0}^J f_j=\sum_{j=J+1}^\infty f_j
\]
is a non-negative, decreasing function dominated by $f$. But $\int_0^1f\le\sum_{j=1}^\infty\varphi_j(1)<\infty$ so the dominated convergence theorem shows that,
$$
\Big\|f-\sum_{j=0}^Jf_j\Big\|_{\2{L^1}+L^\infty}=\int_0^1\Big(f-\sum_{j=0}^Jf_j\widetilde{\Big)}
=\int_0^1\Big(f-\sum_{j=0}^Jf_j\Big)\to0
$$
as $J\to\infty$. Thus $\sum_{j=0}^\infty f_j$ converges to $f$ in $\2{L^1}+L^\infty$.

Since $f$ is non-negative and decreasing,
\[
K(t,g;\2{L^1},L^\infty)\le \sum_{j=1}^\infty\varphi_j(t)=\int_0^tf=K(t,f;\2{L^1},L^\infty).
\]
Theorem \ref{2Op}\ref{W} provides operators $W_2$ and $W_3$ on $\2{L^1}+L^\infty$, both of norm at most $1$, such that $g=W_3W_2\tilde f=W_3W_2f$. Let $g_j=W_3W_2f_j$. The boundedness of $W_3W_2$ implies that $\sum_{j=1}^\infty g_j$ converges to $g$ in $\2{L^1}+L^\infty$. Because the norm of $W_3W_2$ is at most $1$, 
\[
K(t,g_j,\2{L^1},L^\infty)\le K(t,f_j,\2{L^1},L^\infty)=\int_0^t\1f_j=\int_0^tf_j=\varphi_j(t).\qedhere
\]
\end{proof}

\begin{theorem}\label{2iff} If $X$ is in $\Int_1(L^1,L^\infty)$ then $\2X$ is in $\Int_1(\2{L^1},L^\infty)$. Conversely, if $Y$ is in $\Int_1(\2{L^1},L^\infty)$ then $Y=\2X$, with identical norms, for some space $X$ in $\Int_1(L^1,L^\infty)$.
\end{theorem}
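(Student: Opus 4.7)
The plan is to prove the two implications separately, in both cases exploiting the $K$-functional identity of Proposition \ref{Kfnls}.

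For the forward direction, given $X \in \Int_1(L^1, L^\infty)$, I would take an arbitrary norm-one $T : (\2{L^1}, L^\infty) \to (\2{L^1}, L^\infty)$ and $f \in \2X$. Proposition \ref{Kfnls} rewrites the inequality $K(t, Tf; \2{L^1}, L^\infty) \le K(t, f; \2{L^1}, L^\infty)$ as $K(t, \2{Tf}; L^1, L^\infty) \le K(t, \1f; L^1, L^\infty)$. Because $(L^1, L^\infty)$ is an exact Calder\'on-Mityagin couple, this inequality produces a norm-one linear $T' : (L^1, L^\infty) \to (L^1, L^\infty)$ with $T' \1f = \2{Tf}$. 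Since $\1f \in X$ and $X \in \Int_1(L^1, L^\infty)$, applying $T'$ yields
\[
\|Tf\|_{\2X} = \|\2{Tf}\|_X = \|T'\1f\|_X \le \|\1f\|_X = \|f\|_{\2X},
\]
so $T$ is bounded on $\2X$ with norm at most $1$, which is exactly what is needed to conclude $\2X \in \Int_1(\2{L^1}, L^\infty)$.

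For the converse, given $Y \in \Int_1(\2{L^1}, L^\infty)$, I would invoke the Brudny\u\i-Krugljak theorem recalled in the introduction. Its hypotheses are met because Theorem \ref{C2LL}\ref{2ECC} shows $(\2{L^1}, L^\infty)$ is an exact Calder\'on-Mityagin couple ($c = 1$) and Lemma \ref{Kdiv} shows its $K$-divisibility constant equals $1$ ($\gamma = 1$). With $c\gamma = 1$ the theorem produces a parameter of the $K$-method $\Phi$ such that $Y = (\2{L^1}, L^\infty)_\Phi$ with identical norms. I would then set $X = (L^1, L^\infty)_\Phi$; since the $K$-method yields exact interpolation spaces, $X \in \Int_1(L^1, L^\infty)$. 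For any $f \in \2{L^1} + L^\infty$, Proposition \ref{Kfnls} gives
\[
\|f\|_{\2X} = \|\1f\|_X = \|K(\cdot, \1f; L^1, L^\infty)\|_\Phi = \|K(\cdot, f; \2{L^1}, L^\infty)\|_\Phi = \|f\|_Y,
\]
so $Y = \2X$ with identical norms.

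The main technical hurdle is ensuring the Brudny\u\i-Krugljak decomposition holds with \emph{identical}, not merely equivalent, norms. This is precisely why Theorem \ref{C2LL}\ref{2ECC} and Lemma \ref{Kdiv} were established first: they pin down the constants $c = 1$ and $\gamma = 1$, collapsing the $c\gamma$ factor in the quoted estimate to $1$. Everything else in the argument is routine bookkeeping built on the $K$-functional identity of Proposition \ref{Kfnls}, which is the engine driving the correspondence between the two classes of interpolation spaces.
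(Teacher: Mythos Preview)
Your converse is essentially identical to the paper's. Your forward direction is correct but takes a different, more elementary route: the paper first represents $X$ as $(L^1,L^\infty)_\Phi$ with identical norms (using Calder\'on's theorem together with the Brudny\u\i--Krugljak theorem and the fact that the $K$-divisibility constant of $(L^1,L^\infty)$ is $1$), and then checks via Proposition~\ref{Kfnls} that $\2X=(\2{L^1},L^\infty)_\Phi$ with identical norms, which immediately places $\2X$ in $\Int_1(\2{L^1},L^\infty)$. You instead verify the defining property of $\Int_1$ directly, using only Calder\'on's theorem to produce the auxiliary operator $T'$. Your route avoids invoking Brudny\u\i--Krugljak on the $(L^1,L^\infty)$ side; the paper's route has the advantage of simultaneously establishing $\2X=(\2{L^1},L^\infty)_\Phi$ for the \emph{same} parameter $\Phi$, a fact that is reused later (see Lemma~\ref{2embed}\ref{2cons}).
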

\begin{proof} If $X\in\Int_1(L^1,L^\infty)$ we apply the theorems of Calder\'on and Brudny\u\i-Krugljak to get a parameter $\Phi$ of the $K$-method such that $X=(L^1,L^\infty)_{\Phi}$, with identical norms. By Proposition \ref{Kfnls}, for any $f\in \2{L^1}+L^\infty$, all five of the statements: $f\in \2X$, $\1f\in X$, $K(\cdot,\1f;L^1,L^\infty)\in\Phi$, $K(\cdot,f;\2{L^1},L^\infty)\in\Phi$ and $f\in(\2{L^1},L^\infty)_\Phi$ are equivalent. Moreover,
\[
\|f\|_{\2X}=\|\1f\|_X=\|K(\cdot,\1f;L^1,L^\infty)\|_\Phi
=\|K(\cdot,f;\2{L^1},L^\infty)\|_\Phi=\|f\|_{(\2{L^1},L^\infty)_\Phi}.
\]
This shows $\2X=(\2{L^1},L^\infty)_\Phi$, with identical norms. In particular, $\2X\in \Int_1(\2{L^1},L^\infty)$. 

On the other hand, if $Y\in\Int_1(\2{L^1},L^\infty)$ then by Theorem \ref{C2LL}\ref{2ECC}, the Brudny\u\i-Krugljak theorem, and Lemma \ref{Kdiv}, there exists a parameter $\Phi$ of the $K$-method such that $Y=(\2{L^1},L^\infty)_{\Phi}$, with identical norms. Let $X=(L^1,L^\infty)_{\Phi}$. Then $X\in\Int_1(L^1,L^\infty)$. Arguing as above we see that $\2X=(\2{L^1},L^\infty)_\Phi=Y$, with identical norms.
\end{proof}

\begin{lemma}\label{2embed} Let $m$ be a constant satisfying Definition {\rm \ref{Mdef}}. Suppose $X_0$ and $X_1$ are in $\Int_1(L^1,L^\infty)$ and $\mathcal F$ is an exact interpolation functor. Set $X=\mathcal F(X_0,X_1)$.
\begin{enumerate}[leftmargin=1.85em, label=\rm{(\alph*)}]
\item\label{2cons} There exist $\Phi_0$ and $\Phi_1$, parameters of the $K$-method, such that $X_j=(L^1,L^\infty)_{\Phi_j}$ and $\2{X_j}=(\2{L^1},L^\infty)_{\Phi_j}$, in each case with identical norms, for $j=0,1$.
\item\label{2inX} $\mathcal F(\2{X_0},\2{X_1})\hookrightarrow\2X$ with norm at most $1$.
\item\label{2Xin} $\2X\hookrightarrow \mathcal F(\2{X_0},\2{X_1})$ with norm at most $m$.
\item\label{2approxK} $\2{X_0}+\2{X_1}=(X_0+X_1\2{)\,}$, and for all $f$ in this space, 
\[
K(t,\1f;X_0,X_1)\le K(t,f;\2{X_0},\2{X_1})\le mK(t,\1f;X_0,X_1),\quad t>0.
\]
\end{enumerate}
\end{lemma}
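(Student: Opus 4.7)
The plan is to build each part from the operators $W_1$, $I$, and $M$ of Section \ref{operators}, using (a) to align the $K$-method parameters for $X_j$ and $\2{X_j}$. For (a), I would apply the Brudny\u\i-Krugljak theorem to the exact Calder\'on-Mityagin couple $(L^1,L^\infty)$, whose $K$-divisibility constant is $1$, obtaining a parameter $\Phi_j$ with $X_j=(L^1,L^\infty)_{\Phi_j}$ in identical norms; the computation in the proof of Theorem \ref{2iff}, based on Proposition \ref{Kfnls}, then produces $\2{X_j}=(\2{L^1},L^\infty)_{\Phi_j}$ in identical norms as well.

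For (b), I would fix $f\in\mathcal F(\2{X_0},\2{X_1})$ and take the operator $W_1$ of Theorem \ref{2Op}\ref{W} associated to $f$. Since $|W_1\psi|\le\1\psi$ and $\1\psi$ is its own least decreasing majorant, $\2{W_1\psi}\le\1\psi$; the lattice property of Banach function spaces then gives $\|W_1\psi\|_{\2{X_j}}\le\|\psi\|_{\2{X_j}}$, and exactness of $\mathcal F$ lifts $W_1$ to a norm-one operator on $\mathcal F(\2{X_0},\2{X_1})$. Separately, $|\psi|\le\1\psi$ gives a norm-one embedding $\2{X_j}\hookrightarrow X_j$, which lifts through $\mathcal F$ to $\mathcal F(\2{X_0},\2{X_1})\hookrightarrow X$. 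Applying both bounds at $f$ yields
\[
\|f\|_{\2X}=\|\1f\|_X=\|W_1f\|_X\le\|W_1f\|_{\mathcal F(\2{X_0},\2{X_1})}\le\|f\|_{\mathcal F(\2{X_0},\2{X_1})}.
\]

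For (c), I would fix $f\in\2X$ and take the operator $M$ of Definition \ref{Mdef} satisfying $M\1f=f$. Its norm bound yields $K(t,M\psi;\2{L^1},L^\infty)\le mK(t,\psi;L^1,L^\infty)$, which combined with the common parameter $\Phi_j$ of (a) produces $\|M\psi\|_{\2{X_j}}\le m\|\psi\|_{X_j}$; by exactness of $\mathcal F$, $\|f\|_{\mathcal F(\2{X_0},\2{X_1})}=\|M\1f\|_{\mathcal F(\2{X_0},\2{X_1})}\le m\|\1f\|_X=m\|f\|_{\2X}$. This is the step I expect to be the main obstacle: $M$ is furnished only between the couples $(L^1,L^\infty)\to(\2{L^1},L^\infty)$, so no single-couple exact interpolation argument transfers it directly to $(X_0,X_1)\to(\2{X_0},\2{X_1})$, and (a) is what bridges the gap by turning a $K$-functional inequality into an operator bound between interpolation spaces of different couples.

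Finally, for (d), applying (b) and (c) to the exact sum functor $\mathcal F(Y_0,Y_1)=Y_0+Y_1$ gives $\2{X_0}+\2{X_1}=(X_0+X_1\2{)\,}$ as sets. The two $K$-functional inequalities then follow by decomposing in the appropriate couples: from any $f=f_0+f_1$ with $f_j\in\2{X_j}$ we obtain $\1f=W_1f_0+W_1f_1$ with $\|W_1f_j\|_{X_j}\le\|W_1f_j\|_{\2{X_j}}\le\|f_j\|_{\2{X_j}}$, yielding the lower bound; from any $\1f=g_0+g_1$ with $g_j\in X_j$ we obtain $f=Mg_0+Mg_1$ with $\|Mg_j\|_{\2{X_j}}\le m\|g_j\|_{X_j}$, yielding the upper bound. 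Taking infima over admissible decompositions completes (d).
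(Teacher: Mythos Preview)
Your proposal is correct and follows essentially the same approach as the paper: part (a) via Calder\'on and Brudny\u\i--Krugljak as in Theorem~\ref{2iff}; part (b) via $W_1$ and the inclusion $\2{X_j}\hookrightarrow X_j$ lifted by the exact functor $\mathcal F$; part (c) via $M$ from Definition~\ref{Mdef}, using (a) to pass from the $(L^1,L^\infty)\to(\2{L^1},L^\infty)$ bound to $X_j\to\2{X_j}$; and part (d) by applying (b) and (c) to the sum. The only cosmetic differences are that the paper uses the composite $IW_1$ and $\Phi_j$-interpolation in (b) rather than your direct lattice argument, and packages (d) by applying (b) and (c) to the one-parameter family of exact functors $\Sigma_t(Z_0,Z_1)=(Z_0+Z_1,\,K(t,\cdot\,;Z_0,Z_1))$, which yields both the set equality and the $K$-functional estimates in one stroke instead of your explicit decomposition argument.
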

\begin{proof} The proof of part \ref{2cons} is contained in the proof of Theorem \ref{2iff}.
For part \ref{2inX}, fix $f\in\mathcal F(\2{X_0},\2{X_1})$. Then $f\in\2{X_0}+\2{X_1}\subseteq \2{L^1}+L^\infty$ so Theorem \ref{2Op} provides operators $I$ and $W_1$ such that $IW_1:(\2{L^1},L^\infty)\to(L^1,L^\infty)$ has norm at most $1$, and $IW_1f=\1f$. Real interpolation with parameters $\Phi_0$ and $\Phi_1$ shows that $IW_1:(\2{X_0},\2{X_1})\to(X_0,X_1)$, with norm at most $1$, and we may apply the exact functor $\mathcal F$ to obtain $IW_1:\mathcal F(\2{X_0},\2{X_1})\to X$, with norm at most $1$. We conclude that $\1f=IW_1f\in X$ and $\|f\|_{\2X}=\|\1f\|_X\le \|f\|_{F(\2{X_0},\2{X_1})}$. Thus, each $f\in\mathcal F(\2{X_0},\2{X_1})$ is in $\2X$ and the norm of the embedding is at most $1$.

For part \ref{2Xin}, fix $g\in\2X$. Since $\1g\in X=\mathcal F(X_0, X_1)\subseteq L^1+L^\infty$, $g\in\2{L^1}+L^\infty$. By Definition \ref{Mdef} there exists a linear operator $M:(L^1,L^\infty)\to(\2{L^1},L^\infty)$, of norm at most $m$, such that $M\1g=g$. It follows by real interpolation with parameter $\Phi_j$, that $M: X_j\to\2{X_j}$ with norm at most $m$, for $j=0,1$. Applying the exact functor $\mathcal F$ shows $M:X\to F(\2{X_0},\2{X_1})$, with norm at most $m$. Since $\1g\in X$, $g=M\1g\in F(\2{X_0},\2{X_1})$ and $\|g\|_{\mathcal F(\2{X_0},\2{X_1})}=\|M\1g\|_{\mathcal F(\2{X_0},\2{X_1})}\le m\|\1g\|_X=m\|g\|_\1X$. Thus, each $g\in\2X$ is in $\mathcal F(\2{X_0},\2{X_1})$ and the norm of the embedding is at most $m$.

For part \ref{2approxK} we apply \ref{2inX} and \ref{2Xin} to a family of exact functors. Fix $t>0$. If $(Z_0,Z_1)$ is a compatible couple of Banach spaces let $\Sigma_t(Z_0,Z_1)$ be the space $Z_0+Z_1$, equipped with the norm $\|f\|_{\Sigma_t(Z_0,Y_1)}=K(t,f;Z_0,Z_1)$. Then the embeddings from \ref{2inX} and \ref{2Xin} show that $\2{X_0}+\2{X_1}=(X_0+X_1\2{)\,}$, and for all $f$ in this space, the estimates of the constants in \ref{2inX} and \ref{2Xin} show
\[
K(t,\1f;X_0,X_1)=\|\1f\|_{\Sigma_t(X_0,X_1)}\le\|f\|_{\Sigma_t(\2{X_0},\2{X_1})}=K(t,f;\2{X_0},\2{X_1}),
\]
and
\[
K(t,f;\2{X_0},\2{X_1})=\|f\|_{\Sigma_t(\2{X_0},\2{X_1})}\le m\|\1f\|_{\Sigma_t(X_0,X_1)}=mK(t,\1f;X_0,X_1).\qedhere
\]
\end{proof}

Note that if $\mathcal F$ is a positive exact interpolation functor, then the assumption $X_0, X_1\in \Int_1(L^1,L^\infty)$ is not needed to prove \ref{2inX} above. Because $f\mapsto\2f$ is sublinear, the embedding $\mathcal F(\2{X_0},\2{X_1})\hookrightarrow\2X$, with norm at most $1$, is a consequence of Theorem 2.1 of \cite{M1}.

The next theorem gives the correspondence between relative Calder\'on-Mityagin pairs in $\Int_1(L^1,L^\infty)$ and those in $\Int_1(\2{L^1},L^\infty)$. Note that the converse below is stated only for couples in $\Int_1(\2{L^1},L^\infty)$ of the form $(\2{X_0},\2{X_1})$ with $X_0,X_1\in\Int_1(L^1,L^\infty)$. However, in view of Theorem \ref{2iff} there is no loss of generality in this formulation.
\begin{theorem}\label{2CC} Let $m$ be a constant satisfying Definition {\rm \ref{Mdef}} and let $X_0, X_1, Y_0, Y_1\in \Int_1(L^1,L^\infty)$. If $(X_0,X_1)$ and $(Y_0,Y_1)$ form a $c$-uniform relative Calder\'on-Mityagin pair then $(\2{X_0},\2{X_1})$ and $(\2{Y_0},\2{Y_1})$ form an $(m^2c)$-uniform relative Calder\'on-Mityagin pair. Conversely, if $(\2{X_0},\2{X_1})$ and $(\2{Y_0},\2{Y_1})$ form a $c$-uniform relative Calder\'on-Mityagin pair then $(X_0,X_1)$ and $(Y_0,Y_1)$ form an $(m^2c)$-uniform relative Calder\'on-Mityagin pair.
\end{theorem}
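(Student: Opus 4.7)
Lemma \ref{2embed}\ref{2approxK} lets me transfer $K$-functional comparisons between the $X$-scale and the $\2X$-scale at the cost of a factor $m$, and the operators in Theorem \ref{2Op} and Definition \ref{Mdef} then produce the required mappings. I prove both directions by building the desired operator as an explicit composition, reading the forward chain essentially left to right and the converse chain essentially right to left.

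\textbf{Forward direction.} Suppose $f\in\2{X_0}+\2{X_1}$ and $g\in\2{Y_0}+\2{Y_1}$ satisfy $K(t,g;\2{Y_0},\2{Y_1})\le K(t,f;\2{X_0},\2{X_1})$. Applying Lemma \ref{2embed}\ref{2approxK} on both sides gives $K(t,\1g;Y_0,Y_1)\le mK(t,\1f;X_0,X_1)$, and the rel CM hypothesis on $(X_0,X_1),(Y_0,Y_1)$ delivers $T_0:(X_0,X_1)\to(Y_0,Y_1)$ of norm at most $mc$ with $T_0\1f=\1g$. Let $W_1$ be the operator of Theorem \ref{2Op}\ref{W} associated with $f$ (norm at most $1$, $W_1f=\1f$) and $M_g$ the operator of Definition \ref{Mdef} with parameter $g$ (norm at most $m$, $M_g\1g=g$); by real interpolation using the representations of Lemma \ref{2embed}\ref{2cons}, $W_1:(\2{X_0},\2{X_1})\to(X_0,X_1)$ and $M_g:(Y_0,Y_1)\to(\2{Y_0},\2{Y_1})$ inherit the same norm bounds. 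Then $T=M_g\circ T_0\circ W_1$ has norm at most $m^2c$ and $Tf=M_gT_0\1f=M_g\1g=g$.

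\textbf{Converse direction.} Suppose $x\in X_0+X_1$ and $y\in Y_0+Y_1$ satisfy $K(t,y;Y_0,Y_1)\le K(t,x;X_0,X_1)$. Pass to the decreasing rearrangements: since $X_j,Y_j$ are rearrangement invariant (as exact interpolation spaces for $(L^1,L^\infty)$), $K(\cdot,x^*;X_0,X_1)=K(\cdot,x;X_0,X_1)$ and $K(\cdot,y^*;Y_0,Y_1)=K(\cdot,y;Y_0,Y_1)$, and because $x^*,y^*$ are nonnegative decreasing, they lie in $\2{X_0}+\2{X_1}$ and $\2{Y_0}+\2{Y_1}$ respectively. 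Lemma \ref{2embed}\ref{2approxK} then yields $K(t,y^*;\2{Y_0},\2{Y_1})\le mK(t,x^*;\2{X_0},\2{X_1})$, and the hypothesis provides $T_1:(\2{X_0},\2{X_1})\to(\2{Y_0},\2{Y_1})$ of norm at most $mc$ with $T_1x^*=y^*$. To connect $x$ with $x^*$ and $y^*$ with $y$, I invoke Calder\'on's exact theorem on $(L^1,L^\infty)$: since $K(\cdot,x;L^1,L^\infty)=K(\cdot,x^*;L^1,L^\infty)$ and $K(\cdot,y^*;L^1,L^\infty)=K(\cdot,y;L^1,L^\infty)$, there exist linear $\tilde T,\tilde T':(L^1,L^\infty)\to(L^1,L^\infty)$, each of norm at most $1$, with $\tilde Tx=x^*$ and $\tilde T'y^*=y$, and these restrict to norm-one operators $X_j\to X_j$ and $Y_j\to Y_j$ by exact interpolation. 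Finally, take $M_{x^*}$ from Definition \ref{Mdef} with parameter $x^*$; since $\1{x^*}=x^*$, one has $M_{x^*}x^*=x^*$, and $M_{x^*}:(X_0,X_1)\to(\2{X_0},\2{X_1})$ has norm at most $m$. Composing with the norm-one inclusion $(\2{Y_0},\2{Y_1})\hookrightarrow(Y_0,Y_1)$, the operator $T=\tilde T'\circ T_1\circ M_{x^*}\circ\tilde T:(X_0,X_1)\to(Y_0,Y_1)$ has norm at most $1\cdot mc\cdot m\cdot 1=m^2c$ and $Tx=\tilde T'T_1M_{x^*}x^*=\tilde T'T_1x^*=\tilde T'y^*=y$.

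\textbf{Main obstacle.} The converse is the delicate direction, because $\2{X_0}+\2{X_1}\subsetneq X_0+X_1$ in general and there is no bounded linear lifting of an arbitrary $x\in X_0+X_1$ into $\2{X_0}+\2{X_1}$ to which the CM hypothesis could be applied directly. The remedy is to first transport $x$ linearly to $x^*$, which, being nonnegative decreasing, already lies in $\2{X_0}+\2{X_1}$ with the right norm; the otherwise sublinear rearrangement step is realised as a genuine norm-one linear operator tied to the specific element $x$ by invoking the exact Calder\'on theorem on $(L^1,L^\infty)$, and a companion operator $\tilde T'$ undoes the rearrangement on the $Y$-side.
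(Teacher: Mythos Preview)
Your proof is correct and follows essentially the same approach as the paper's: in both directions you use Lemma \ref{2embed}\ref{2approxK} to transfer the $K$-functional inequality (absorbing a factor $m$), invoke the relative Calder\'on--Mityagin hypothesis to obtain the core operator, and then sandwich it between $W_1$ (or $IW_1$) and $M$ on one side, and in the converse between Calder\'on operators taking $x\mapsto x^*$, $y^*\mapsto y$ together with $M_{x^*}$ and the inclusion $I$. The only cosmetic differences are that you fold the scalar $m$ into the norm of $T_0,T_1$ rather than into the argument, and you write $W_1$ where the paper writes $IW_1$; since $I$ is the norm-one inclusion this is harmless, though strictly speaking interpolating $W_1:(\2{L^1},L^\infty)\to(\2{L^1},L^\infty)$ via Lemma \ref{2embed}\ref{2cons} lands in $(\2{X_0},\2{X_1})$, and the passage to $(X_0,X_1)$ requires the inclusion.
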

\begin{proof} Suppose $(X_0,X_1)$ and $(Y_0,Y_1)$ form a $c$-uniform relative Calder\'on-Mityagin pair, $f\in\2{X_0}+\2{X_1}$, $g\in\2{Y_0}+\2{Y_1}$, and $K(t,g;\2{Y_0},\2{Y_1})\le K(t,f;\2{X_0},\2{X_1})$ for $t>0$.  By Lemma \ref{2embed}\ref{2approxK}, applied first to the couple $(Y_0, Y_1)$ and then to the couple $(X_0,X_1)$, and by the homogeneity of the $K$-functional,
\[
K(t,\1g;Y_0,Y_1)\le K(t,g;\2{Y_0},\2{Y_1})
\le K(t,f;\2{X_0},\2{X_1})\le K(t,m\1f;X_0,X_1).
\]
By hypothesis, there exists a linear operator $T:(X_0,X_1)\to(Y_0,Y_1)$, of norm at most $c$, such that $T(m\1f)=\1g$. Since $f\in\2{X_0}+\2{X_1}$, $\1f\in X_0+X_1\subseteq L^1+L^\infty$. The product $IW_1:(\2{L^1},L^\infty)\to(L^1,L^\infty)$, from Theorem \ref{2Op}, has norm at most $1$ and takes $f$ to $\1f$. The operator $M:(L^1,L^\infty)\to(\2{L^1},L^\infty)$, from Definition \ref{Mdef}, has norm at most $m$ and takes $\1g$ to $g$. By Lemma \ref{2embed}\ref{2cons}, $IW_1:(\2X_0,\2X_1)\to(X_0,X_1)$ has norm at most $1$, and $M:(Y_0,Y_1)\to(\2Y_0,\2Y_1)$ has norm at most $m$. It follows that
\[
\includegraphics{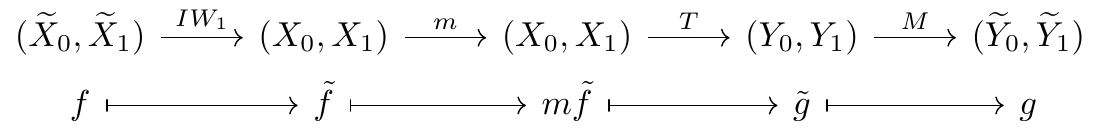}
\]
where $m$ is used to denote multiplication by $m$. That is, the operator $\bar T=MTmIW_1$ maps $(\2X_0,\2X_1)$ to $(\2Y_0,\2Y_1)$, with norm at most $m^2c$, and $\bar Tf=g$. We conclude that $(\2X_0,\2X_1)$ and $(\2Y_0,\2Y_1)$ form an $(m^2c)$-uniform relative Calder\'on-Mityagin pair.

Conversely, suppose $(\2{X_0},\2{X_1})$ and $(\2{Y_0},\2{Y_1})$ form a $c$-uniform relative Calder\'on-Mityagin pair, $f\in X_0+X_1$, $g\in Y_0+Y_1$,  and $K(t,g;Y_0,Y_1)\le K(t,f;X_0,X_1)$ for $t>0$. Since $g$ and $g^*$ are equimeasurable, and $f$ and $f^*$ are equimeasurable,
\[
K(t,g^*;Y_0,Y_1)=K(t,g;Y_0,Y_1)\le K(t,f;X_0,X_1)=K(t,f^*,X_0,X_1).
\]
But $g^*=\2{\,g^*}$ and $f^*=\2{f^*}$, so by Lemma \ref{2embed}\ref{2approxK} and the homogeneity of the $K$-functional,
\[
K(t,g^*;\2{Y_0},\2{Y_1})\le mK(t,\2{\,g^*};Y_0,Y_1)
\le mK(t,\2{f^*},X_0,X_1)\le K(t,mf^*,\2{X_0},\2{X_1}).
\]
By the hypothesis, there exists a $T:(\2{X_0},\2{X_1})\to(\2{Y_0},\2{Y_1})$, with norm at most $c$ such that $T(mf^*)=g^*$.

Since $K(t,f^*;L^1,L^\infty)\le K(t,f;L^1,L^\infty)$ and $K(t,g;L^1,L^\infty)\le K(t,g^*;L^1,L^\infty)$, Calder\'on's theorem provides $V_f, V_g:(L^1,L^\infty)\to(L^1,L^\infty)$, each with norm at most $1$, such that $V_ff=f^*$ and $V_gg^*=g$. Exact interpolation shows that $V_f:(X_0,X_1)\to(X_0,X_1)$ and $V_g:(Y_0,Y_1)\to(Y_0,Y_1)$, each with norm at most $1$.

By Definition \ref{Mdef} there exists an $M:(L^1,L^\infty)\to(\2{L^1},L^\infty)$, with norm at most $m$, such that $M\2{f^*}=f^*$, that is, $Mf^*=f^*$. In Theorem \ref{2Op}\ref{2I} it was observed that the identity operator $I$ maps $(\2{L^1},L^\infty)$ to $(L^1,L^\infty)$, with norm at most $1$. By Lemma \ref{2embed}\ref{2cons}, $M:(X_0,X_1)\to(\2{X_0},\2{X_1})$, with norm at most $m$, and $I:(\2{Y_0},\2{Y_1})\to(Y_0,Y_1)$, with norm at most $1$. Now,
\[
\includegraphics{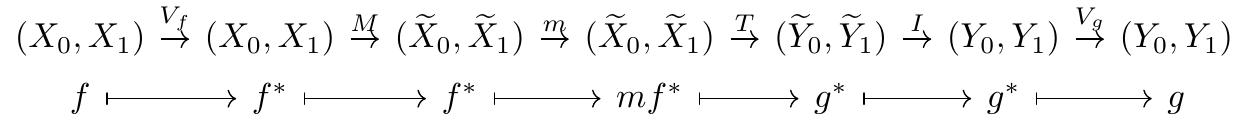}
\]
That is, with $\bar T=V_gITmMV_f$, $\bar T:(X_0,X_1)\to(Y_0,Y_1)$, with norm at most $m^2c$, and $\bar Tf=g$. This shows that the couples $(X_0,X_1)$ and $(Y_0,Y_1)$ form an $(m^2c)$-uniform relative Calder\'on-Mityagin pair and completes the proof.
\end{proof}

Next we establish similar results for $(L^1,(L^\infty)^o)$.

\begin{theorem}\label{CLL} Let $n$ be a constant satisfying Definition {\rm \ref{Ndef}}. Then,
\begin{enumerate}[leftmargin=1.85em, label=\rm{(\alph*)}]
\item\label{ECC} $(L^1,(L^\infty)^o)$ is an exact Calder\'on-Mityagin couple.
\item\label{ERC} $(L^1,L^\infty)$ and $(L^1,(L^\infty)^o)$ form an exact relative Calder\'on-Mityagin pair.
\item\label{URC} $(L^1,(L^\infty)^o)$ and $(L^1,L^\infty)$ form an $n$-uniform relative Calder\'on-Mityagin pair.
\item\label{notERC} $(L^1,(L^\infty)^o)$ and $(L^1,L^\infty)$ do not form an exact relative Calder\'on-Mityagin pair.
\end{enumerate}
\end{theorem}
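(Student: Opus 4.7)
The proof is closely parallel to that of Theorem \ref{C2LL}, replacing the operators of Theorem \ref{2Op} and Definition \ref{Mdef} with those of Theorem \ref{Op} and Definition \ref{Ndef}. In each part my plan is to build the required operator as a composition that traces a path through the schematic diagram (\ref{schema}), and then read off the norm as the product of the constituent norms. The key observation that makes all compositions admissible is Proposition \ref{Kfnls}, which translates a $K$-functional inequality for $(L^1,(L^\infty)^o)$ into one for $(L^1,L^\infty)$ involving the level functions, so that Calder\'on's theorem (and hence Theorem \ref{Op}) applies.

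For \ref{ECC}, given $f,g\in L^1+(L^\infty)^o$ with $K(t,g;L^1,(L^\infty)^o)\le K(t,f;L^1,(L^\infty)^o)$, Proposition \ref{Kfnls} yields $K(t,g^o;L^1,L^\infty)\le K(t,f^o;L^1,L^\infty)$, so Theorem \ref{Op}\ref{U} supplies $U_1,U_2,U_3$ with $U_1 f=f^o$, $U_2 f^o=g^o$, $U_3 g^o=g$. Setting $T=U_3U_2U_1$ gives a map $(L^1,(L^\infty)^o)\to(L^1,(L^\infty)^o)$ of norm at most $1$ with $Tf=g$. For \ref{ERC}, suppose $h\in L^1+L^\infty$ and $g\in L^1+(L^\infty)^o$ satisfy $K(t,g;L^1,(L^\infty)^o)\le K(t,h;L^1,L^\infty)$. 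Proposition \ref{Kfnls} turns this into $K(t,g^o;L^1,L^\infty)\le K(t,h;L^1,L^\infty)$, and $T=U_3IV_4$, with $V_4$ from Theorem \ref{Op}\ref{V3V4} and $I$ from Theorem \ref{Op}\ref{I}, is a map $(L^1,L^\infty)\to(L^1,(L^\infty)^o)$ of norm at most $1$ satisfying $Th=U_3Ig^o=U_3g^o=g$.

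For \ref{URC}, suppose $f\in L^1+(L^\infty)^o$ and $h\in L^1+L^\infty$ satisfy $K(t,h;L^1,L^\infty)\le K(t,f;L^1,(L^\infty)^o)$. Proposition \ref{Kfnls} rewrites the right side as $K(t,f^o;L^1,L^\infty)$, so Calder\'on's theorem (equivalently, Theorem \ref{Op}\ref{V3V4}) provides $V_3:(L^1,L^\infty)\to(L^1,L^\infty)$ of norm at most $1$ with $V_3f^o=h$. Taking $N$ from Definition \ref{Ndef}, the composition $T=V_3N$ maps $(L^1,(L^\infty)^o)\to(L^1,L^\infty)$ with norm at most $n$ and $Tf=V_3f^o=h$. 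Finally, \ref{notERC} is immediate from Lemma \ref{exN}: letting $f$ be the function produced there and setting $h=f^o$, Proposition \ref{Kfnls} gives $K(t,h;L^1,L^\infty)=K(t,f^o;L^1,L^\infty)=K(t,f;L^1,(L^\infty)^o)$, so an exact relative Calder\'on-Mityagin pair would yield a linear operator $N:(L^1,(L^\infty)^o)\to(L^1,L^\infty)$ of norm at most $1$ with $Nf=f^o$, contradicting the $9/8$ lower bound.

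There is no real obstacle here; the content was already placed in Theorem \ref{Op}, Proposition \ref{Kfnls}, Definition \ref{Ndef}, and Lemma \ref{exN}. The only bookkeeping point is to confirm that the hypotheses phrased in Theorem \ref{Op} (which assume \emph{two} $K$-functional inequalities simultaneously) are not actually needed in full for each part; each application of the theorem below uses only the single inequality that corresponds to the particular operator being invoked, as in the proof of Theorem \ref{C2LL}.
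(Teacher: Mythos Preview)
Your proof is correct and follows essentially the same route as the paper. The only cosmetic differences are that for \ref{ECC} the paper cites \cite[Theorem 3.8]{MS} directly while you spell out $T=U_3U_2U_1$ (equivalent, since $U_2$ is built from that very theorem), and for \ref{notERC} you make explicit the reduction to Lemma \ref{exN} that the paper leaves implicit.
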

\begin{proof} Statement \ref{ECC} is Theorem 3.8 of \cite{MS}. For \ref{ERC} and \ref{URC}, operator names refer to those constructed in Theorem \ref{Op}. Refer to the diagram (\ref{schema}).

To prove \ref{ERC}, suppose $h\in L^1+L^\infty$ and $g\in L^1+(L^\infty)^o$ satisfy the inequality $K(t,g;L^1,(L^\infty)^o)\le K(t,h;L^1,L^\infty)$ for $t>0$, and let $T=U_3IV_4$. All three have norm at most $1$ so $T:(L^1,L^\infty)\to(L^1,(L^\infty)^o)$ has norm at most $1$ as well. Also, $Th=U_3IV_4h=U_3Ig^o=U_3g^o=g$.

To prove \ref{URC}, suppose $f\in L^1+(L^\infty)^o$ and $h\in L^1+L^\infty$ satisfy $K(t,h;L^1,L^\infty)\le K(t,f;L^1,(L^\infty)^o)$ for $t>0$, and let $T=V_3N$, where $N$ comes from Definition \ref{Ndef}. The operator $V_3$ has norm at most one and $N$ has norm at most $n$. Thus $T:(L^1,(L^\infty)^o)\to(L^1,L^\infty)$ has norm at most $n$. Also, $Tf=V_3Nf=V_3f^o=h$.

Lemma \ref{exN} proves \ref{notERC}.
\end{proof}

\begin{theorem}\label{iff} If $X\in\Int_1(L^1,L^\infty)$ then $ X^o\in\Int_1(L^1,(L^\infty)^o)$. Conversely, if $Y$ is in $\Int_1(L^1,(L^\infty)^o)$ then $Y= X^o$, with identical norms, for some space $X$ in $\Int_1(L^1,L^\infty)$.
\end{theorem}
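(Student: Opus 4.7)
The strategy is to mirror the proof of Theorem \ref{2iff}, with the level function construction replacing the least decreasing majorant and with the couple $(L^1,(L^\infty)^o)$ playing the role of $(\2{L^1},L^\infty)$. The essential ingredients are the $K$-functional identity $K(t,f;L^1,(L^\infty)^o)=K(t,f^o;L^1,L^\infty)$ from Proposition \ref{Kfnls}, the exact Calder\'on-Mityagin property of $(L^1,(L^\infty)^o)$ from Theorem \ref{CLL}\ref{ECC}, and a $K$-divisibility constant of $1$ for $(L^1,(L^\infty)^o)$.

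For the forward direction, suppose $X\in\Int_1(L^1,L^\infty)$. Since $(L^1,L^\infty)$ is an exact Calder\'on-Mityagin couple with $K$-divisibility constant $1$, the Brudny\u\i--Krugljak theorem furnishes a parameter $\Phi$ of the $K$-method with $X=(L^1,L^\infty)_\Phi$ with identical norms. For any $f\in L^1+(L^\infty)^o$, Proposition \ref{Kfnls} gives the chain of equivalences
\[
f\in X^o\iff f^o\in X\iff K(\cdot,f^o;L^1,L^\infty)\in\Phi\iff K(\cdot,f;L^1,(L^\infty)^o)\in\Phi,
\]
and the corresponding norms agree: $\|f\|_{X^o}=\|f^o\|_X=\|K(\cdot,f;L^1,(L^\infty)^o)\|_\Phi$. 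Thus $X^o=(L^1,(L^\infty)^o)_\Phi$ with identical norms, and in particular $X^o\in\Int_1(L^1,(L^\infty)^o)$.

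For the converse, given $Y\in\Int_1(L^1,(L^\infty)^o)$, combine Theorem \ref{CLL}\ref{ECC} with the Brudny\u\i--Krugljak theorem to obtain a parameter $\Phi$ such that $Y=(L^1,(L^\infty)^o)_\Phi$ with identical norms. Setting $X=(L^1,L^\infty)_\Phi\in\Int_1(L^1,L^\infty)$, the forward direction just established then yields $X^o=(L^1,(L^\infty)^o)_\Phi=Y$ with identical norms.

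The main obstacle is securing \emph{identical} (rather than merely equivalent) norms in the converse, which requires knowing that the $K$-divisibility constant of $(L^1,(L^\infty)^o)$ equals $1$. I would establish this by an argument parallel to Lemma \ref{Kdiv}: given $K(\cdot,g;L^1,(L^\infty)^o)\le\sum_j\varphi_j$ with $\varphi_j$ non-negative and concave, reduce to the case $\varphi_j(0+)=0$, set $f_j=\varphi_j'$ (non-negative, decreasing) and $f=\sum_j f_j$. Since $f$ is decreasing, $f^o=f$, so by Proposition \ref{Kfnls} we have $K(t,f;L^1,(L^\infty)^o)=\int_0^tf\ge K(t,g;L^1,(L^\infty)^o)$. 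The operators $U_2$ and $U_3$ from Theorem \ref{Op}, each of norm at most $1$ on $(L^1,(L^\infty)^o)$, satisfy $U_3U_2f=U_3U_2f^o=U_3g^o=g$; applying $U_3U_2$ termwise to the decomposition $f=\sum_j f_j$ produces a decomposition $g=\sum_j g_j$ with $K(\cdot,g_j;L^1,(L^\infty)^o)\le\int_0^\cdot f_j=\varphi_j$, as required.
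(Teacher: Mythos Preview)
Your proof is correct and follows essentially the same approach as the paper. The only difference is that for the $K$-divisibility constant of $(L^1,(L^\infty)^o)$ being $1$, the paper cites \cite[Corollary 3.9]{MS} directly, whereas you sketch an independent argument parallel to Lemma \ref{Kdiv} using the operators $U_2,U_3$ from Theorem \ref{Op}; your sketch is sound and yields the same conclusion.
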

\begin{proof} If $X\in\Int_1(L^1,L^\infty)$ we apply the theorems of Calder\'on and Brudny\u\i-Krugljak to get a parameter $\Phi$ of the $K$-method such that $X=(L^1,L^\infty)_{\Phi}$, with identical norms. By Proposition \ref{Kfnls}, for any $f\in L^1+(L^\infty)^o$, the five statements: $f\in  X^o$, $ f^o\in X$, $K(\cdot, f^o;L^1,L^\infty)\in\Phi$, $K(\cdot,f;L^1,(L^\infty)^o)\in\Phi$ and $f\in(L^1,(L^\infty)^o)_\Phi$ are all equivalent. Moreover,
\[
\|f\|_{ X^o}=\| f^o\|_X=\|K(\cdot, f^o;L^1,L^\infty)\|_\Phi
=\|K(\cdot,f;L^1,(L^\infty)^o)\|_\Phi=\|f\|_{(L^1,(L^\infty)^o)_\Phi}.
\]
This shows $ X^o=(L^1,(L^\infty)^o)_\Phi$, with identical norms, so $ X^o\in \Int_1(L^1,(L^\infty)^o)$. 

On the other hand, if $Y\in\Int_1(L^1,(L^\infty)^o)$ then by Theorem \ref{CLL}\ref{ECC}, the Brudny\u\i-Krugljak theorem, and \cite[Corollary 3.9]{MS} (which shows that the $K$-divisibility constant of $(L^1,(L^\infty)^o)$ is $1$) there exists a parameter $\Phi$ of the $K$-method such that $Y=(L^1,(L^\infty)^o)_{\Phi}$, with identical norms. Let $X=(L^1,L^\infty)_{\Phi}\in\Int_1(L^1,L^\infty)$. Arguing as above we see that $ X^o=(L^1,(L^\infty)^o)_\Phi=Y$, with identical norms.
\end{proof}

\begin{lemma}\label{embed} Let $n$ be a constant satisfying Definition {\rm \ref{Ndef}}. Suppose $X_0$ and $X_1$ are in $\Int_1(L^1,L^\infty)$ and $\mathcal F$ is an exact interpolation functor. Set $X=\mathcal F(X_0,X_1)$. Then
\begin{enumerate}[leftmargin=1.85em, label=\rm{(\alph*)}]
\item\label{cons} There exist $\Phi_0$ and $\Phi_1$, parameters of the $K$-method, such that $X_j=(L^1,L^\infty)_{\Phi_j}$ and $ X_j^o=(L^1,(L^\infty)^o)_{\Phi_j}$, in each case with identical norms, for $j=0,1$.
\item\label{inX} $\mathcal F( X_0^o, X_1^o)\hookrightarrow X^o$ with norm at most $n$.
\item\label{Xin} $ X^o\hookrightarrow \mathcal F( X_0^o, X_1^o)$ with norm at most $1$.
\item\label{approxK} $ X_0^o+ X_1^o= (X_0+X_1)^o$, and for all $f$ in this space,
\[
(1/n)K(t, f^o;X_0,X_1)\le K(t,f; X_0^o, X_1^o)\le K(t, f^o;X_0,X_1),\quad t>0.
\]
\end{enumerate}
\end{lemma}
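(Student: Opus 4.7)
The plan is to follow the proof of Lemma \ref{2embed} step-by-step, substituting the level function construction for the least decreasing majorant and interchanging the two asymmetric bounds, so that the factor $n$ now appears in part \ref{inX} rather than in the Xin-style part. Specifically, part \ref{cons} is just the content of Theorem \ref{iff}: invoke Calder\'on's theorem and the Brudny\u\i-Krugljak description for each $X_j \in \Int_1(L^1, L^\infty)$ to obtain a parameter $\Phi_j$ of the $K$-method with $X_j = (L^1, L^\infty)_{\Phi_j}$ isometrically, and then exploit the second formula of Proposition \ref{Kfnls} in the chain
\[
\|f\|_{X_j^o} = \|f^o\|_{X_j} = \|K(\cdot, f^o; L^1, L^\infty)\|_{\Phi_j} = \|K(\cdot, f; L^1, (L^\infty)^o)\|_{\Phi_j}
\]
to identify $X_j^o$ isometrically with $(L^1, (L^\infty)^o)_{\Phi_j}$.

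For part \ref{inX}, I would fix $f \in \mathcal F(X_0^o, X_1^o) \subseteq L^1 + (L^\infty)^o$ and invoke Definition \ref{Ndef} to produce a linear operator $N : (L^1, (L^\infty)^o) \to (L^1, L^\infty)$ of norm at most $n$ with $Nf = f^o$. Real interpolation with the parameters $\Phi_j$ from \ref{cons} gives $N : X_j^o \to X_j$ of norm at most $n$, and the exact functor $\mathcal F$ then yields $N : \mathcal F(X_0^o, X_1^o) \to X$ of norm at most $n$, whence $f^o \in X$ and $\|f\|_{X^o} \le n\|f\|_{\mathcal F(X_0^o, X_1^o)}$. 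For part \ref{Xin}, I would fix $g \in X^o$ and, using parts \ref{I} and \ref{U} of Theorem \ref{Op}, form the composition $U_3 I : (L^1, L^\infty) \to (L^1, (L^\infty)^o)$, which has norm at most $1$ and satisfies $(U_3 I)(g^o) = U_3 g^o = g$. Real interpolation with the $\Phi_j$ followed by $\mathcal F$ promotes this to $U_3 I : X \to \mathcal F(X_0^o, X_1^o)$ of norm at most $1$; since $g^o \in X$ we conclude $g \in \mathcal F(X_0^o, X_1^o)$ with $\|g\|_{\mathcal F(X_0^o, X_1^o)} \le \|g\|_{X^o}$.

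For part \ref{approxK}, I would apply \ref{inX} and \ref{Xin} to the family of exact functors $\Sigma_t$ defined on a couple $(Z_0, Z_1)$ as $Z_0 + Z_1$ equipped with the norm $K(t, \cdot; Z_0, Z_1)$. The two embeddings translate directly into
\[
K(t, f^o; X_0, X_1) \le n K(t, f; X_0^o, X_1^o) \quad\text{and}\quad K(t, f; X_0^o, X_1^o) \le K(t, f^o; X_0, X_1),
\]
which is the desired two-sided bound, and the underlying equality $X_0^o + X_1^o = (X_0 + X_1)^o$ reads off at any fixed $t$. I foresee no serious obstacle — every ingredient (Proposition \ref{Kfnls}, Theorem \ref{Op}, Definition \ref{Ndef}, the Calder\'on/Brudny\u\i-Krugljak parametrization) is already in place. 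The only subtle point is tracking the swapped asymmetry: the factor $n$ moves to part \ref{inX} because realizing $f \mapsto f^o$ from $(L^1, (L^\infty)^o)$ into $(L^1, L^\infty)$ costs a factor of $n$, whereas realizing the reverse map $g^o \mapsto g$ is free at norm $1$ via $U_3 I$ — the opposite of the situation in Lemma \ref{2embed}, where the natural direction was free and $m$ appeared on the reverse.
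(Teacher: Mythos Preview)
Your proposal is correct and follows essentially the same approach as the paper's own proof: part \ref{cons} via Theorem \ref{iff}, part \ref{inX} via the operator $N$ from Definition \ref{Ndef} pushed through real interpolation and $\mathcal F$, part \ref{Xin} via $U_3I$ from Theorem \ref{Op}, and part \ref{approxK} via the $\Sigma_t$ functors. You have also correctly tracked the swapped asymmetry relative to Lemma \ref{2embed}.
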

\begin{proof} 
The proof of part \ref{cons} is contained in the proof of Theorem \ref{iff}. As a consequence we have $ X_j^o\subseteq L^1+(L^\infty)^o$ for $j=0,1$. It follows that $F( X_0^o, X_1^o)\subseteq L^1+(L^\infty)^o$. Fix $f\in F( X_0^o, X_1^o)$. By Definition \ref{Ndef} there exists a linear operator $N:(L^1,(L^\infty)^o)\to(L^1,L^\infty)$, of norm at most $n$, such that $Nf=f^o$. By real interpolation with parameter $\Phi_j$, $N: X_j^o\to X_j$ with norm at most $n$, for $j=0,1$. Applying the exact functor $\mathcal F$ shows $N:\mathcal F( X_0^o, X_1^o)\to X$, with norm at most $n$. Since $f^o=Nf\in X$ we have $f\in  X^o$ and $\|f\|_{ X^o}=\|f^o\|_X=\|Nf\|_X\le n\|f\|_{\mathcal F( X_0^o, X_1^o)}$. This proves \ref{inX}.

Now let $g\in X^o$. Then $g^o\in X=\mathcal F(X_0,X_1)\subseteq L^1+L^\infty$ so $g\in L^1+(L^\infty)^o$. Proposition \ref{Kfnls} shows that $K(t, g^o;L^1,L^\infty)=K(t,g;L^1,(L^\infty)^o)$ so Theorem \ref{Op} provides operators $I$ and $U_3$ such that $U_3I:(L^1,L^\infty)\to(L^1,(L^\infty)^o)$, with norm at most $1$, and $U_3I g^o=g$. By real interpolation, $U_3I:X_j\to (L^1,(L^\infty)^o)_{\Phi_j}= X_j^o$, with norm at most $1$, for $j=0,1$. Applying $\mathcal F$, we get, $U_3I:X\to\mathcal F( X_0^o, X_1^o)$, with norm at most $1$. In particular, $\|g\|_{\mathcal F( X_0^o, X_1^o)}=\|U_3I g^o\|_{\mathcal F( X_0^o, X_1^o)}\le\|g^o\|_X=\|g\|_{ X^o}$. This proves \ref{Xin}.

For part \ref{approxK} we apply \ref{inX} and \ref{Xin} to the family of exact functors used in Lemma \ref{2embed}; $\Sigma_t(Z_0,Z_1)=Z_0+Z_1$ with  norm $\|f\|_{\Sigma_t(Z_0,Z_1)}=K(t,f;Z_0,Z_1)$. The embeddings of \ref{inX} and \ref{Xin} show that $ X_0^o+ X_1^o= (X_0+X_1)^o$ and, for all $f$ in this space, the estimates of the constants in \ref{inX} and \ref{Xin} show
\[
K(t, f^o;X_0,X_1)=\| f^o\|_{\Sigma_t(X_0,X_1)}\le n\|f\|_{\Sigma_t( {X_0}, X_1^o)}=nK(t,f; X_0^o, X_1^o),
\]
and
\[
K(t,f; X_0^o, X_1^o)=\|f\|_{\Sigma_t( X_0^o, X_1^o)}\le \| f^o\|_{\Sigma_t(X_0,X_1)}=K(t, f^o;X_0,X_1).\qedhere
\]
\end{proof}

When considering couples of spaces in $\Int_1(L^1,(L^\infty)^o)$, as we do in the converse below, there is no loss in generality in considering only couples of the form $( X_0^o, X_1^o)$ for $X_0,X_1\in \Int_1(L^1,L^\infty)$. This is a consequence of Theorem \ref{iff}. 
\begin{theorem}\label{CC} Let $n$ be a constant satisfying Definition {\rm \ref{Ndef}} and suppose that $X_0$, $X_1$, $Y_0$, and $Y_1$ are all in $\Int_1(L^1,L^\infty)$. If $(X_0,X_1)$ and $(Y_0,Y_1)$ form a $c$-uniform relative Calder\'on-Mityagin pair then $( X_0^o, X_1^o)$ and $( Y_0^o, Y_1^o)$ form an $(n^2c)$-uniform relative Calder\'on-Mityagin pair. Conversely, if $( X_0^o, X_1^o)$ and $( Y_0^o, Y_1^o)$ form a $c$-uniform relative Calder\'on-Mityagin pair then $(X_0,X_1)$ and $(Y_0,Y_1)$ form an $(n^2c)$-uniform relative Calder\'on-Mityagin pair.
\end{theorem}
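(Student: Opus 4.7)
The plan is to mirror the two-direction argument for Theorem \ref{2CC}, with the couple $(\2{L^1},L^\infty)$ replaced by $(L^1,(L^\infty)^o)$, the operator $M$ and constant $m$ of Definition \ref{Mdef} replaced by $N$ and $n$ of Definition \ref{Ndef}, and the two-sided $K$-functional estimate of Lemma \ref{2embed}\ref{2approxK} replaced by its analogue,
\[
(1/n)K(t,f^o;X_0,X_1)\le K(t,f;X_0^o,X_1^o)\le K(t,f^o;X_0,X_1),
\]
from Lemma \ref{embed}\ref{approxK}. The identity $I$ of Theorem \ref{Op}\ref{I}, the composite $U_3I$, and the operator $N$ of Definition \ref{Ndef}, together with the interpolation representations $X_j=(L^1,L^\infty)_{\Phi_j}$ and $X_j^o=(L^1,(L^\infty)^o)_{\Phi_j}$ of Lemma \ref{embed}\ref{cons}, will play the roles that $M$, $IW_1$, and the identity played in the proof of Theorem \ref{2CC}.

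For the forward direction, starting from $K(t,g;Y_0^o,Y_1^o)\le K(t,f;X_0^o,X_1^o)$, I chain two applications of Lemma \ref{embed}\ref{approxK}, one on the $(Y_0,Y_1)$ side and one on the $(X_0,X_1)$ side, to get
\[
K(t,g^o;Y_0,Y_1)\le nK(t,g;Y_0^o,Y_1^o)\le nK(t,f;X_0^o,X_1^o)\le K(t,nf^o;X_0,X_1).
\]
The hypothesis then yields $T:(X_0,X_1)\to(Y_0,Y_1)$ of norm at most $c$ with $T(nf^o)=g^o$. Real interpolation with the parameters $\Phi_j$ of Lemma \ref{embed}\ref{cons} lifts $N$ (with $Nf=f^o$) to a couple map $N:(X_0^o,X_1^o)\to(X_0,X_1)$ of norm at most $n$, and $U_3I$ (with $U_3Ig^o=g$) to a couple map $U_3I:(Y_0,Y_1)\to(Y_0^o,Y_1^o)$ of norm at most $1$. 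The composition $\bar T=U_3I\,T(nN)$ then maps $(X_0^o,X_1^o)$ to $(Y_0^o,Y_1^o)$ with norm at most $n^2c$ and satisfies $\bar Tf=g$.

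For the converse, given $K(t,g;Y_0,Y_1)\le K(t,f;X_0,X_1)$, I first pass to decreasing rearrangements, using that $X_j,Y_j\in\Int_1(L^1,L^\infty)$. Equimeasurability of $f$ with $f^*$ and $g$ with $g^*$, combined with Calder\'on's theorem and exact interpolation, supplies operators $V_f,V_g$ of norm at most $1$ on the relevant couples with $V_ff=f^*$ and $V_gg^*=g$, and gives $K(t,g^*;Y_0,Y_1)\le K(t,f^*;X_0,X_1)$. Since $f^*=(f^*)^o$ and $g^*=(g^*)^o$, two further applications of Lemma \ref{embed}\ref{approxK} produce $K(t,g^*;Y_0^o,Y_1^o)\le K(t,nf^*;X_0^o,X_1^o)$, so the hypothesis supplies $T:(X_0^o,X_1^o)\to(Y_0^o,Y_1^o)$ of norm at most $c$ with $T(nf^*)=g^*$. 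The identity $I$ from Theorem \ref{Op}\ref{I} satisfies $If^*=f^*$, and an instance of $N$ from Definition \ref{Ndef} specialised to $g^*$ satisfies $Ng^*=g^*$; both lift through Lemma \ref{embed}\ref{cons} to couple maps between the relevant spaces. I then assemble $\bar T=V_gN\,T(nI)V_f:(X_0,X_1)\to(Y_0,Y_1)$ of norm at most $n^2c$ with $\bar Tf=g$.

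The main bookkeeping obstacle is the chaining of $K$-functional estimates: because Lemma \ref{embed}\ref{approxK} is asymmetric, a factor of $n$ must be introduced on exactly one end of each chain and then absorbed consistently with the operator norms of $N$ (on the side where the level function projection is actually being computed) and of $I$ or $U_3I$ (on the side where no such loss occurs). Once the correctly scaled $K$-inequality is in hand, the rest is routine composition and real interpolation of operators already supplied by Theorem \ref{Op} and Definition \ref{Ndef}.
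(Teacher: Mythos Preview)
Your proposal is correct and follows essentially the same approach as the paper's proof: the forward direction uses $\bar T=U_3I\,T\,(nN)$ and the converse uses $\bar T=V_gN\,T\,(nI)V_f$, with the $K$-functional chains from Lemma~\ref{embed}\ref{approxK} assembled exactly as you describe. Your treatment of the asymmetry in Lemma~\ref{embed}\ref{approxK} and the placement of the factor $n$ matches the paper precisely.
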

\begin{proof} Suppose $(X_0,X_1)$ and $(Y_0,Y_1)$ form a $c$-uniform relative Calder\'on-Mityagin pair, $f\in X_0^o+ X_1^o$, $g\in Y_0^o+ Y_1^o$, and $K(t,g; Y_0^o, Y_1^o)\le K(t,f; X_0^o, X_1^o)$ for $t>0$. By Lemma \ref{embed}\ref{approxK} and the homogeneity of the $K$-functional,
\[
K(t, g^o;Y_0,Y_1)\le nK(t,g; Y_0^o, Y_1^o)
\le nK(t,f; X_0^o, X_1^o)\le K(t,n f^o;X_0,X_1).
\]
By hypothesis, there exists a linear operator $T:(X_0,X_1)\to(Y_0,Y_1)$, of norm at most $c$, such that $T(n f)= g^o$. Since $f\in X_0^o+ X_1^o$, $ f^o\in X_0+X_1\subseteq L^1+L^\infty$. The product $U_3I:(L^1,L^\infty)\to(L^1,(L^\infty)^o)$, from Theorem \ref{Op}, has norm at most $1$ and takes $ g^o$ to $g$. The operator $N:(L^1,(L^\infty)^o)\to(L^1,L^\infty)$, from Definition \ref{Ndef}, has norm at most $n$ and takes $f$ to $ f^o$. By Lemma \ref{embed}\ref{cons}, $U_3I:(Y_0,Y_1)\to( Y_0^o, Y_1^o)$ has norm at most $1$, and $N:( X_0^o, X_1^o)\to(X_0,X_1)$ has norm at most $n$. It follows that
\[
\includegraphics{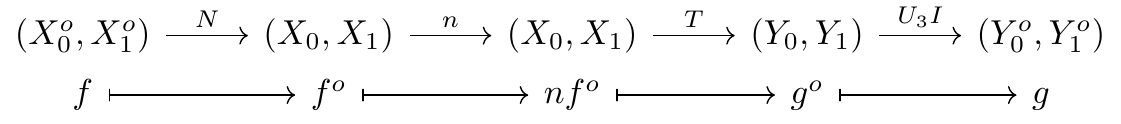}
\]
where $n$ is used to denote multiplication by $n$. That is,
the operator $\bar T=U_3ITnN$ maps $( X_0^o, X_1^o)$ to $( Y_0^o, Y_1^o)$, with norm at most $n^2c$, and $\bar Tf=g$. We conclude that $( X_0^o, X_1^o)$ and $( Y_0^o, Y_1^o)$ form an $(n^2c)$-uniform relative Calder\'on-Mityagin pair.

Conversely, suppose $( X_0^o, X_1^o)$ and $( Y_0^o, Y_1^o)$ form a $c$-uniform relative Calder\'on-Mityagin pair, $f\in X_0+X_1$, $g\in Y_0+Y_1$ and $K(t,g;Y_0,Y_1)\le K(t,f;X_0,X_1)$. Since $g$ and $g^*$ are equimeasurable, and $f$ and $f^*$ are equimeasurable,
\[
K(t,g^*;Y_0,Y_1)=K(t,g;Y_0,Y_1)\le K(t,f;X_0,X_1)=K(t,f^*;X_0,X_1).
\]
But $g^*= (g^*)^o$ and $f^*= (f^*)^o$, so by Lemma \ref{embed}\ref{approxK} and the homogeneity of the $K$-functional,
\[
K(t,g^*; Y_0^o, Y_1^o)
\le K(t, (g^*)^o;Y_0,Y_1)
\le K(t, (f^*)^o;X_0,X_1)
\le K(t,nf^*; X_0^o, X_1^o).
\]
By the hypothesis, there exists a $T:( X_0^o, X_1^o)\to( Y_0^o, Y_1^o)$, with norm at most $c$ such that $T(nf^*)=g^*$.

Since $K(t,f^*;L^1,L^\infty)\le K(t,f;L^1,L^\infty)$ and $K(t,g;L^1,L^\infty)\le K(t,g^*;L^1,L^\infty)$, Calder\'on's theorem provides $V_f, V_g:(L^1,L^\infty)\to(L^1,L^\infty)$, each with norm at most $1$, such that $V_ff=f^*$ and $V_gg^*=g$. Exact interpolation shows that $V_f:(X_0,X_1)\to(X_0,X_1)$ and $V_g:(Y_0,Y_1)\to(Y_0,Y_1)$, each with norm at most $1$.

By Definition \ref{Ndef} there exists an $N:(L^1,(L^\infty)^o)\to(L^1,L^\infty)$, with norm at most $n$, such that $Ng^*= {(g^*)^o}$, that is, $Ng^*=g^*$. In Theorem \ref{Op}\ref{I} it was observed that the identity operator $I$ maps $(L^1,L^\infty)$ to $(L^1,(L^\infty)^o)$, with norm at most $1$. By exact interpolation, $I:(X_0,X_1)\to( X_0^o,  X_1^o)$, with norm at most $1$, and $N:( Y_0^o, Y_1^o)\to(Y_0,Y_1)$, with norm at most $n$.  Now,
\[
\includegraphics{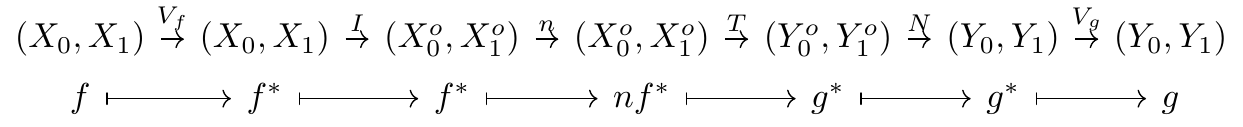}
\]
That is, with $\bar T=V_gNTnIV_f$, $\bar T:(X_0,X_1)\to(Y_0,Y_1)$, with norm at most $n^2c$, and $\bar Tf=g$. This shows that $(X_0,X_1)$ and $(Y_0,Y_1)$ form an $(n^2c)$-uniform relative Calder\'on-Mityagin pair and completes the proof.
\end{proof}

It is interesting to observe the special case $(X_0,X_1)=(Y_0,Y_1)$ of Theorems \ref{2CC} and \ref{CC}, stated here without careful tracking of constants.
\begin{corollary}\label{allCC} For $X_0,X_1\in\Int_1(L^1,L^\infty)$ the following are equivalent:
\begin{enumerate}[leftmargin=1.85em, label=\rm{(\alph*)}]
\item\label{ri} $(X_0,X_1)$ is a Calder\'on-Mityagin couple.
\item\label{ldm} $(\2X_0,\2X_1)$ is a Calder\'on-Mityagin couple.
\item\label{lev} $( X_0^o, X_1^o)$ is a Calder\'on-Mityagin couple.
\end{enumerate}
\end{corollary}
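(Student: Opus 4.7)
The corollary is essentially the special case $(Y_0,Y_1)=(X_0,X_1)$ of Theorems \ref{2CC} and \ref{CC}, so the plan is simply to specialize. When source and target couples coincide, the notion of a $c$-uniform relative Calder\'on-Mityagin pair reduces to that of a $c$-uniform Calder\'on-Mityagin couple, so each half of those theorems converts Calder\'on-Mityagin status from one couple to another, at the cost of inflating the constant by a factor of $m^2$ or $n^2$.

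Concretely, I would first observe that Theorem \ref{2Op}\ref{M} and Theorem \ref{Op}\ref{N} provide finite admissible values $m=4$ and $n=2$ in Definitions \ref{Mdef} and \ref{Ndef}, respectively, so all constants appearing below are finite. For \ref{ri} $\Leftrightarrow$ \ref{ldm}, take $(Y_0,Y_1)=(X_0,X_1)$ in Theorem \ref{2CC}: if $(X_0,X_1)$ is a $c$-uniform Calder\'on-Mityagin couple, the first half of that theorem yields $(\2X_0,\2X_1)$ as an $(m^2c)$-uniform Calder\'on-Mityagin couple; the reverse implication comes from the second half of the same theorem. The equivalence \ref{ri} $\Leftrightarrow$ \ref{lev} is the parallel argument applied to Theorem \ref{CC}, with $m$ replaced by $n$. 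Combining these two equivalences by transitivity links all three statements.

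Since the corollary explicitly disclaims careful tracking of constants, no additional bookkeeping is required; existence of \emph{some} finite uniform constant on each side is enough. The main obstacle was already dealt with in the proofs of Theorems \ref{2CC} and \ref{CC}, where the operators $M$, $N$, and the various identity and rearrangement maps were assembled into diagrams transporting $K$-functional inequalities across the three couples. In the present proof there is essentially nothing left to do beyond the specialization described above.
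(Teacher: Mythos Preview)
Your proposal is correct and matches the paper's own treatment: the corollary is presented there simply as the special case $(X_0,X_1)=(Y_0,Y_1)$ of Theorems~\ref{2CC} and~\ref{CC}, with no further argument given. Your additional remarks about the finiteness of $m$ and $n$ and the transitivity linking all three statements are accurate elaborations of what the paper leaves implicit.
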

\section{Extension to General Measures}\label{genMeas}

There are natural analogues of the operations $f\mapsto \1f$ and $f\mapsto f^o$ in spaces of $\lambda$-measurable functions on $\mathbb R$, provided $\lambda(-\infty,x]<\infty$ for $x\in\mathbb R$. The corresponding spaces $\2X$ and $ X^o$ enjoy the same close connection to rearrangement invariant spaces that they do in the case of Lebesgue measure on $(0,\infty)$. In particular, most of the results of Sections \ref{operators} and \ref{CCaRCP} remain valid for functions on a finite interval, for sequence spaces.

Our approach to proving these results is to identify the spaces of $\lambda$-measurable functions with complemented subspaces of the corresponding spaces over $(0,\infty)$ and then apply the previous results.

To begin, let us introduce the analogues of the least decreasing majorant and the level function. Let $\lambda$ be a measure on $\mathbb R$ for which the Borel subsets are measurable and $\Lambda(x)\equiv\lambda(-\infty,x]<\infty$ for all $x\in \mathbb R$. The least decreasing majorant of a $\lambda$-measurable $f$ is
\[
\1f(x)=\essup_\lambda\{|f(y)|:y\ge x\}.
\]
If $X$ is a Banach function space of $\lambda$-measurable functions, let $\2X$ be the space of all $\lambda$-measurable functions such that $\1f\in X$, equipped with the norm $\|f\|_{\2X}=\|\1f\|_X$.

We say a non-negative, $\lambda$-measurable function $F$ on $\mathbb R$ is \emph{$\lambda$-concave} provided
\[
(\Lambda(b)-\Lambda(x))(F(x)-F(a))\ge(F(b)-F(x))(\Lambda(x)-\Lambda(a))
\]
whenever $a\le x\le b$. (Note that when $\lambda$ is Lebesgue measure on $(0,\infty)$ this agrees with the usual notion of concavity.) The \emph{level function} $ f^o$ of $f$ is the Radon-Nikodym derivative, with respect to $\lambda$, of the least $\lambda$-concave majorant of $\int_{(-\infty,x)}f\,d\lambda$. As in the case of Lebesgue measure on $(0,\infty)$, it may be necessary to extend this definition by monotonicity in case the $\lambda$-concave majorant fails to be finite. If $X\in\Int_1(L^1_\lambda, L^\infty_\lambda)$, let $ X^o$ be the space of all $\lambda$-measurable functions such that $ f^o\in X$, equipped with the norm $\|f\|_{ X^o}=\| f^o\|_X$.

Notice that the operations $f\mapsto\1f$ and $f\mapsto f^o$, as well as the spaces $\2X$ and $ X^o$, depend on the measure $\lambda$. To avoid confusion with these ``overloaded'' operators, we will be careful to associate each function with a particular measure before speaking of its least decreasing majorant or its level function.

We again observe that $ ({L^1_\lambda})^o=L^1_\lambda$ and $\2{L^\infty_\lambda}={L^\infty_\lambda}$ with identical norms. Also, $\|f\|_{{L^1_\lambda}}\le\|f\|_{\2{L^1_\lambda}}$ and
\[
\|f\|_{(L^\infty_\lambda)^o}=\sup_{x\in\mathbb R}\frac1{\Lambda(x)}\int_{(-\infty,x]}|f|\,d\lambda\le\|f\|_{{L^\infty_\lambda}}.
\]

The \emph{method of retracts} (see pages 54f in \cite{BS2}) embeds a $\sigma$-finite measure space into a non-atomic one and exploits the close connection between spaces of measurable functions with respect to these two measures. When the original measure $\lambda$ is on $\mathbb R$ and satisfies $\lambda(-\infty,x]<\infty$ for each $x$, this method takes a particularly simple and powerful form that, in particular, preserves decreasing functions. This construction, from Section 4 of \cite{MS}, gives an order-preserving, measurable transformation from $(\mathbb
R,\lambda)$ into a subspace of $(0,\infty)$ with Lebesgue measure. Define $\Omega$, $\varphi$, $E_\lambda$, and $A_\lambda$ by
\begin{eqnarray*}
\Omega&=&\{t>0:t\le\Lambda(y)\text{
for some }y\in\mathbb R\},\\
\varphi(t)&=&\inf\{y:t\le \Lambda(y)\},\quad t\in\Omega,\\
 E_\lambda f&=&(f\circ\varphi)\chi_\Omega,\quad\text{and}\\
A_\lambda h(t)&=&\begin{cases}\frac{\chi_\Omega(t)}{|I|}\int_Ih,&t\in I\in\mathcal I_\lambda\\
\chi_\Omega(t)h(t),&t\notin\cup_I\in\mathcal I_\lambda.\end{cases}
\end{eqnarray*}
Here $\mathcal I_\lambda$ denotes the collection of all non-empty intervals $(\Lambda(y-),\Lambda(y)]\subseteq\Omega$ for $y\in \mathbb R$.

The map $ E_\lambda$ takes $\lambda$-measurable functions to Lebesgue measurable functions, respecting the level function and least decreasing majorant constructions. Theorem 5.3 of \cite{MS} shows that $ ( E_\lambda f)^o= E_\lambda  f^o$ on $\Omega$. Note that $ f^o$ is the level function of $f$ with respect to $\lambda$, while $ ( E_\lambda f)^o$ is the level function of $ E_\lambda f$ with respect to Lebesgue measure. The corresponding result for the least decreasing majorant is
$\2{ E_\lambda f}= E_\lambda\1f$ on $\Omega$. To see this, let $t\in\Omega$. Then
\[
\2{ E_\lambda f}(t)=\essup\{f(\varphi(s)):t\le s\in\Omega\}
\le\essup_\lambda\{f(x):\varphi(t)\le x\}= E_\lambda\1f(t)
\]
and, since $\varphi(t)\le x$ if and only if $t\le \Lambda(x)$ and $x=\varphi(\Lambda(x))$ $\lambda$-almost everywhere (see (5) and Lemma 4.1 of \cite{MS}),
\begin{align*}
 E_\lambda\1f(t)=&\essup_\lambda\{f(\varphi(\Lambda(x))):t\le \Lambda(x)\}\\
\le& \essup\{f(\varphi(s)):t\le s\in\Omega\}=\2{ E_\lambda f}(t).
\end{align*}

The maps $ E_\lambda$ and $A_\lambda$ work together to identify function spaces of $\lambda$-measurable functions with subspaces of the corresponding spaces of Lebesgue measurable functions. This is because $A_\lambda$ is a projection onto the range of $ E_\lambda$.
\begin{proposition} The map $A_\lambda$ is a projection defined on $L^1+(L^\infty)^o$ and is a contraction on $L^1$, $(L^\infty)^o$, $\2{L^1}$, and $L^\infty$. Also, $A_\lambda E_\lambda= E_\lambda$, and the maps $ E_\lambda:{L^1_\lambda}\leftrightarrow A_\lambda(L^1)$,
$ E_\lambda:(L^\infty_\lambda)^o\leftrightarrow A_\lambda((L^\infty)^o)$,
$ E_\lambda:\2{L^1_\lambda}\leftrightarrow A_\lambda(\2{L^1})$, and
$ E_\lambda:{L^\infty_\lambda}\leftrightarrow A_\lambda(L^\infty)$, are bijective isometries.
\end{proposition}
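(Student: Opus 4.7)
The plan is to verify each claim directly from the definitions, exploiting two features of the setup: for any $\lambda$-measurable $f$, the function $E_\lambda f = (f\circ\varphi)\chi_\Omega$ is constant on each atom interval $I = (\Lambda(y-),\Lambda(y)] \in \mathcal I_\lambda$ (since $\varphi \equiv y$ on $I$); and by construction $A_\lambda h$ is constant on each such $I$, equal to $h$ on $\Omega\setminus\bigcup\mathcal I_\lambda$, and zero off $\Omega$. The projection identity $A_\lambda^2 = A_\lambda$ and the identity $A_\lambda E_\lambda = E_\lambda$ are both immediate, since re-averaging a constant over $I$ returns that constant. The $L^\infty$-contraction holds because an average is bounded by the supremum, and the $L^1$-contraction follows from $\int_I |A_\lambda h| \le \int_I |h|$ on each interval together with $|A_\lambda h| \le |h|$ on $\Omega\setminus\bigcup\mathcal I_\lambda$ and $A_\lambda h = 0$ off $\Omega$.

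For the contraction on $\2{L^1}$, we first invoke Jensen on each $I$ to get $|A_\lambda h| \le A_\lambda |h|$, reducing to $h \ge 0$. The key observation is that $A_\lambda$ preserves the class of non-negative decreasing functions: if $g$ is decreasing, then on $I = (a,b]$ the average of $g$ lies between $g(a+)$ and $g(b-)$, and between adjacent intervals the averages decrease (since $g$ itself does), so $A_\lambda g$ is decreasing throughout $\Omega$. Applying this with $g = \1h$ shows that $A_\lambda \1h$ is a decreasing majorant of $A_\lambda h$, hence $\2{A_\lambda h} \le A_\lambda \1h$, and the $L^1$-contraction yields $\|A_\lambda h\|_{\2{L^1}} \le \|\1h\|_{L^1} = \|h\|_{\2{L^1}}$. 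For $(L^\infty)^o$, again reduce to $h \ge 0$, set $F(x) = \int_0^x h$ and $G(x) = \int_0^x A_\lambda h$, and let $\bar F$ denote the least concave majorant of $F$. For $x$ outside the atom intervals, $G(x) \le F(x) \le \bar F(x)$; for $x$ inside $I = (a,b]$, the function $G$ is linear between $G(a) \le \bar F(a)$ and $G(b) \le \bar F(b)$, so concavity of $\bar F$ on $[a,b]$ gives $\bar F \ge G$ on $I$ as well. Thus $\bar F$ is a concave majorant of $G$, and $\sup_{x>0} G(x)/x \le \sup_{x>0} \bar F(x)/x = \sup_{x>0} F(x)/x$, which is the desired bound $\|A_\lambda h\|_{(L^\infty)^o} \le \|h\|_{(L^\infty)^o}$.

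Finally, the $L^1_\lambda \leftrightarrow A_\lambda(L^1)$ and $L^\infty_\lambda \leftrightarrow A_\lambda(L^\infty)$ isometries are a straight change-of-variables: $\varphi$ pushes Lebesgue measure on $\Omega$ forward to $\lambda$ on $\mathbb R$, giving $\|E_\lambda f\|_{L^p} = \|f\|_{L^p_\lambda}$ for $p = 1$ and $p = \infty$. The cases $\2{L^1_\lambda}$ and $(L^\infty_\lambda)^o$ reduce to these via the identities $\2{E_\lambda f} = E_\lambda \1f$ and $(E_\lambda f)^o = E_\lambda f^o$ on $\Omega$ already recorded in the text. The image of $E_\lambda$ lies in $A_\lambda(X)$ because $A_\lambda E_\lambda = E_\lambda$; for surjectivity, any $g \in A_\lambda(X)$ is constant on each $I$ and vanishes off $\Omega$, so setting $f(y) = g(\Lambda(y))$ yields $E_\lambda f = g$ almost everywhere on $\Omega$. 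The subtlest step is the $(L^\infty)^o$-contraction, where the crux is recognizing $G$ as a secant polygonalization of $F$ across the atom intervals, automatically dominated by any concave majorant of $F$.
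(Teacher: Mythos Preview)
Your argument is correct and follows the same overall outline as the paper's proof: verify the projection identity, the four contraction properties, and then the isometries. The $\2{L^1}$ contraction argument (observe that $A_\lambda$ preserves non-negative decreasing functions, deduce $\2{A_\lambda h}\le A_\lambda\1h$, then apply the $L^1$ bound) is exactly what the paper does.

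The difference is in how much is proved from scratch. The paper imports several pieces from \cite{MS}: Corollary~3.2 there gives the $L^1$ and $(L^\infty)^o$ contractions; Lemma~4.4 gives $A_\lambda E_\lambda=E_\lambda$ and the bijective isometries for $L^1_\lambda$ and $(L^\infty_\lambda)^o$; and Lemma~4.2 gives the equimeasurability used for the $L^\infty_\lambda$ isometry. You instead supply direct arguments throughout. Your treatment of the $(L^\infty)^o$ contraction---recognising $x\mapsto\int_0^x A_\lambda h$ as agreeing with $x\mapsto\int_0^x h$ at the endpoints of each atom interval and linear in between, hence dominated by any concave majorant of the latter---is a clean self-contained proof where the paper simply cites the earlier work. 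Likewise, your pushforward/change-of-variables argument for the $L^p_\lambda$ isometries and your explicit inverse $f(y)=g(\Lambda(y))$ for surjectivity replace the citations to \cite{MS}. The net effect is that your write-up is longer but independent of \cite{MS}, while the paper's is shorter by deferring to that reference; the mathematical content is the same.
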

\begin{proof} It is easy to see that $A_\lambda$ is a projection defined on  $L^1+(L^\infty)^o$ and that it is a contraction on $L^\infty$. Corollary 3.2 of \cite{MS} shows that $A_\lambda$ is a contraction on $L^1$ and $(L^\infty)^o$. Since the averages in $A_\lambda$ are all taken over intervals, $A_\lambda$ maps non-negative decreasing functions to non-negative decreasing functions. Therefore, $\2{A_\lambda f}\le\2{A_\lambda\1f}=A_\lambda\1f$, so $\|A_\lambda f\|_{\2{L^1}}\le\|A_\lambda\1f\|_{L^1}\le\|\1f\|_{L^1}=\|f\|_{\2{L^1}}$. Thus, $A_\lambda$ is a contraction on $\2{L^1}$.

Lemma 4.4 of \cite{MS} shows $A_\lambda E_\lambda= E_\lambda$ and that $ E_\lambda:{L^1_\lambda}\leftrightarrow A_\lambda(L^1)$ and $ E_\lambda:(L^\infty_\lambda)^o\leftrightarrow A_\lambda((L^\infty)^o)$ are bijective isometries. Since $\2{L^1_\lambda}\subseteq {L^1_\lambda}$ and ${L^\infty_\lambda}\subseteq(L^\infty_\lambda)^o$ $ E_\lambda:\2{L^1_\lambda}\leftrightarrow A_\lambda(\2{L^1})$ and
$ E_\lambda:{L^\infty_\lambda}\leftrightarrow A_\lambda(L^\infty)$ are both bijections. It remains to show that they are isometric. For $f\in \2{L^1_\lambda}$,
\[
\| E_\lambda f\|_{\2{L^1}}=\|\2{ E_\lambda f}\|_{L^1}=\| E_\lambda \1f\|_{L^1}=\|\1f\|_{L^1_\lambda}=\|f\|_{\2{L^1_\lambda}}.
\]
It is an easy consequence of Lemma 4.2 of \cite{MS} that $f$ and $ E_\lambda f$ are equimeasurable. Therefore $\|E_\lambda f\|_{L^\infty}=\|f\|_{{L^\infty_\lambda}}$.
\end{proof}

\begin{corollary}\label{Ksubs}
\begin{enumerate}[leftmargin=1.85em, label=\rm{(\alph*)}]
\item\label{1oo} If $f\in {L^1_\lambda}+{L^\infty_\lambda}$ then $K(t,f;{L^1_\lambda},{L^\infty_\lambda})=K(t, E_\lambda f;L^1,L^\infty)$.
\item\label{21oo} If $f\in \2{L^1_\lambda}+{L^\infty_\lambda}$ then
\[
K(t,f;\2{L^1_\lambda},{L^\infty_\lambda})=K(t, E_\lambda f;\2{L^1},L^\infty)=K(t,\1f;{L^1_\lambda},{L^\infty_\lambda}).
\]
\item\label{31oo} If $f\in {L^1_\lambda}+(L^\infty_\lambda)^o$ then
\[
K(t,f;{L^1_\lambda},(L^\infty_\lambda)^o)=K(t, E_\lambda f;L^1,(L^\infty)^o)=K(t, f^o;{L^1_\lambda},{L^\infty_\lambda}).
\]
\end{enumerate}
\end{corollary}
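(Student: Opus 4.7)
The plan is to deduce all three parts from the corresponding results on $(0,\infty)$ with Lebesgue measure, by exploiting the bijective isometries $E_\lambda$ and the complementary projection $A_\lambda$ from the preceding proposition.

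For \ref{1oo}, the key tools are the isometric bijections $E_\lambda:L^1_\lambda\leftrightarrow A_\lambda(L^1)$ and $E_\lambda:{L^\infty_\lambda}\leftrightarrow A_\lambda(L^\infty)$, together with the fact that $A_\lambda$ is a contraction projection on both $L^1$ and $L^\infty$. One inequality is immediate: any $\lambda$-decomposition $f=f_0+f_1$ produces, via $E_\lambda$, a Lebesgue decomposition $E_\lambda f=E_\lambda f_0+E_\lambda f_1$ with identical component norms, yielding $K(t, E_\lambda f;L^1,L^\infty)\le K(t,f;L^1_\lambda,{L^\infty_\lambda})$. For the reverse, given a Lebesgue decomposition $E_\lambda f=g_0+g_1$, I apply $A_\lambda$ to obtain $E_\lambda f=A_\lambda g_0+A_\lambda g_1$ with contraction estimates $\|A_\lambda g_0\|_{L^1}\le\|g_0\|_{L^1}$ and $\|A_\lambda g_1\|_{L^\infty}\le\|g_1\|_{L^\infty}$; since each $A_\lambda g_j$ lies in the range of $E_\lambda$, the inverse isometry pulls this back to a $\lambda$-decomposition of $f$ with the desired bounds.

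For part \ref{21oo}, the first equality $K(t,f;\2{L^1_\lambda},{L^\infty_\lambda})=K(t, E_\lambda f;\2{L^1},L^\infty)$ follows by the very same argument, now invoking the isometric bijection $E_\lambda:\2{L^1_\lambda}\leftrightarrow A_\lambda(\2{L^1})$ and the contraction of $A_\lambda$ on $\2{L^1}$. For the second equality, Proposition \ref{Kfnls} yields $K(t, E_\lambda f;\2{L^1},L^\infty)=K(t,\2{E_\lambda f};L^1,L^\infty)$; combining this with the identity $\2{E_\lambda f}=E_\lambda\1f$ on $\Omega$ (derived in the excerpt) and part \ref{1oo} applied to $\1f$ then closes the chain. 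Part \ref{31oo} is entirely analogous: the first equality is again the isometry-projection argument, and the second combines the level-function clause of Proposition \ref{Kfnls} with the identity $(E_\lambda f)^o=E_\lambda f^o$ on $\Omega$ (Theorem 5.3 of \cite{MS}) and part \ref{1oo} applied to $f^o$.

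The one delicate point is that the $K$-functional identities on $(0,\infty)$ with Lebesgue measure require $\2{E_\lambda f}=E_\lambda\1f$ and $(E_\lambda f)^o=E_\lambda f^o$ to hold almost everywhere on all of $(0,\infty)$, not merely on $\Omega$. Since $\Omega^c\subseteq(\Lambda(\infty),\infty)$ and $E_\lambda f$ vanishes on $\Omega^c$, both $E_\lambda\1f$ and $E_\lambda f^o$ vanish there by construction; $\2{E_\lambda f}$ vanishes there because the least decreasing majorant of a function supported in $(0,\Lambda(\infty)]$ is itself supported in $(0,\Lambda(\infty)]$; and $(E_\lambda f)^o$ vanishes there because $\int_0^x|E_\lambda f|$ is constant for $x>\Lambda(\infty)$, so its least concave majorant is also constant on that terminal interval.
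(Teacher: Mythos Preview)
Your proof is correct and follows essentially the same route as the paper's: both directions of \ref{1oo} use the isometry $E_\lambda$ in one direction and $E_\lambda^{-1}A_\lambda$ in the other, and parts \ref{21oo} and \ref{31oo} reduce to \ref{1oo} via Proposition~\ref{Kfnls} and the intertwining identities $\widetilde{E_\lambda f}=E_\lambda\tilde f$ and $(E_\lambda f)^o=E_\lambda f^o$. Your additional paragraph verifying that these intertwining identities extend from $\Omega$ to all of $(0,\infty)$ is a welcome clarification that the paper leaves implicit.
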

\begin{proof} Fix $f\in {L^1_\lambda}+{L^\infty_\lambda}$. If $f=f_0+f_1$ with $f_0\in {L^1_\lambda}$ and $f_1\in {L^\infty_\lambda}$, then $ E_\lambda f= E_\lambda f_0+ E_\lambda f_1$ so
\[
K(t, E_\lambda f;L^1,L^\infty)\le\| E_\lambda f_0\|_{L^1}+t\| E_\lambda f_1\|_{L^\infty}
=\|f_0\|_{{L^1_\lambda}}+t\|f_1\|_{{L^\infty_\lambda}}.
\]
Take the infimum over all such $f_0$ and $f_1$ to get ``$\ge$'' in \ref{1oo}. 

For the reverse inequality, suppose $ E_\lambda f=h_0+h_1$ with $h_0\in L^1$ and $h_1\in L^\infty$, then
\[
f= E_\lambda^{-1}A_\lambda E_\lambda f= E_\lambda^{-1}A_\lambda h_0+ E_\lambda^{-1}A_\lambda h_1.
\]
But $ E_\lambda^{-1}A_\lambda$ has norm at most $1$ on both $L^1$ and $L^\infty$, so
\[
K(t,f;{L^1_\lambda},{L^\infty_\lambda})\le\| E_\lambda^{-1}A_\lambda h_0\|_{{L^1_\lambda}}+t\| E_\lambda^{-1}A_\lambda h_1\|_{{L^\infty_\lambda}}\le\|h_0\|_{L^1}+t\|h_1\|_{L^\infty}.
\]
Take the infimum over all such $h_0$ and $h_1$ to get ``$\le$'' in \ref{1oo}.

With ${L^1_\lambda}$ and $L^1$ replaced by $\2{L^1_\lambda}$ and $\2{L^1}$, the same argument proves the first equation in \ref{21oo}. The other follows from Proposition \ref{Kfnls} and part \ref{1oo}.

With ${L^\infty_\lambda}$ and $L^\infty$ replaced by $(L^\infty_\lambda)^o$ and $(L^\infty)^o$, the same argument also proves the first equation in \ref{31oo}. The other follows from Proposition \ref{Kfnls} and part \ref{1oo}.
\end{proof}

Here is our extension of Theorems \ref{C2LL} and \ref{CLL} to general measures. Interestingly, they generalize to the case of two measures as easily as to one.
\begin{theorem}\label{LLgen} Let $m$ and $n$ be constants satisfying Definitions {\rm \ref{Mdef}} and {\rm \ref{Ndef}}, respectively. Then,
\begin{enumerate}[leftmargin=1.85em, label=\rm{(\alph*)}]
\item\label{LL2ECC} $(\2{L^1_\mu},{L^\infty_\mu})$ is an exact Calder\'on-Mityagin couple.
\item\label{LL2ECCP} $(\2{L^1_\mu},{L^\infty_\mu})$ and $(\2{L^1_\nu},{L^\infty_\nu})$ form an exact relative Calder\'on-Mityagin pair.
\item\label{LL2ECP} $(\2{L^1_\mu},{L^\infty_\mu})$ and $({L^1_\nu},{L^\infty_\nu})$ form an exact relative Calder\'on-Mityagin pair.
\item\label{LL2UCP} $({L^1_\mu},{L^\infty_\mu})$ and $(\2{L^1_\nu},{L^\infty_\nu})$ form an $m$-uniform relative Calder\'on-Mityagin pair.
\item\label{LLECC} $({L^1_\mu},(L^\infty_\mu)^o)$ is an exact Calder\'on-Mityagin couple.
\item\label{LLECCP} $({L^1_\mu},(L^\infty_\mu)^o)$ and $({L^1_\nu},(L^\infty_\nu)^o)$ form an exact relative Calder\'on-Mityagin pair.
\item\label{LLECP} $({L^1_\mu},{L^\infty_\mu})$ and $({L^1_\nu},(L^\infty_\nu)^o)$ form an exact relative Calder\'on-Mityagin pair.
\item\label{LLUCP} $({L^1_\mu},(L^\infty_\mu)^o)$ and $({L^1_\nu},{L^\infty_\nu})$ form an $n$-uniform relative Calder\'on-Mityagin pair.
\end{enumerate}
\end{theorem}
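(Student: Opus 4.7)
The plan is to reduce each of the eight items to its Lebesgue-measure counterpart from Theorem \ref{C2LL} or Theorem \ref{CLL}, using the retract machinery developed just above. For every measure $\lambda$ appearing in the statement, $E_\lambda$ isometrically embeds $L^1_\lambda$, $L^\infty_\lambda$, $\2{L^1_\lambda}$, and $(L^\infty_\lambda)^o$ into their Lebesgue-measure analogues, and $A_\lambda$ contracts each of $L^1$, $L^\infty$, $\2{L^1}$, and $(L^\infty)^o$, while $A_\lambda E_\lambda=E_\lambda$. Corollary \ref{Ksubs} will transport K-functional inequalities across these embeddings without loss of information.

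Taking part \ref{LL2UCP} as a representative case: suppose $h\in{L^1_\mu}+{L^\infty_\mu}$ and $g\in\2{L^1_\nu}+{L^\infty_\nu}$ satisfy $K(t,g;\2{L^1_\nu},{L^\infty_\nu})\le K(t,h;{L^1_\mu},{L^\infty_\mu})$. Corollary \ref{Ksubs} converts this to $K(t,E_\nu g;\2{L^1},L^\infty)\le K(t,E_\mu h;L^1,L^\infty)$, and Theorem \ref{C2LL}\ref{2URC} then supplies a $T\colon (L^1,L^\infty)\to(\2{L^1},L^\infty)$ of norm at most $m$ with $TE_\mu h=E_\nu g$. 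I would take $\bar T=E_\nu^{-1}A_\nu T E_\mu$: the factor $E_\mu$ isometrically embeds $({L^1_\mu},{L^\infty_\mu})$ into $(L^1,L^\infty)$, while $E_\nu^{-1}A_\nu$ contracts $(\2{L^1},L^\infty)$ onto $(\2{L^1_\nu},{L^\infty_\nu})$. Hence $\bar T$ inherits the norm bound $m$, and $A_\nu E_\nu=E_\nu$ delivers $\bar T h=g$.

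The remaining items follow the same three-step template: (i) push the K-functional hypothesis across $E_\mu$ and $E_\nu$ via Corollary \ref{Ksubs}; (ii) apply the corresponding part of Theorem \ref{C2LL} (for \ref{LL2ECC}--\ref{LL2UCP}) or Theorem \ref{CLL} (for \ref{LLECC}--\ref{LLUCP}) on the Lebesgue side to obtain an operator $T$ with the correct norm; (iii) conjugate by $E_\nu^{-1}A_\nu(\cdot)E_\mu$, specialising to $E_\mu^{-1}A_\mu(\cdot)E_\mu$ in the one-measure parts \ref{LL2ECC} and \ref{LLECC}. The four contraction properties of $A_\lambda$ line up exactly with the four function spaces $L^1$, $L^\infty$, $\2{L^1}$, $(L^\infty)^o$ that appear across the eight couples, so the norm of $\bar T$ never exceeds the norm of $T$. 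I do not anticipate any genuine obstacle; the main care is simply the bookkeeping of verifying, in each of the eight cases, that the two sandwich maps $E_\mu$ and $E_\nu^{-1}A_\nu$ land in the correct couples with the correct contraction estimates, and that the identity $A_\lambda E_\lambda=E_\lambda$ is applied on the appropriate side to recover $\bar T f=g$ or $\bar T h=g$.
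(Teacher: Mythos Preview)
Your proposal is correct and follows essentially the same approach as the paper: prove one representative part (you and the paper both choose \ref{LL2UCP}) by transporting the $K$-functional inequality via Corollary \ref{Ksubs}, invoking the corresponding Lebesgue-measure result, and conjugating by $E_\nu^{-1}A_\nu(\cdot)E_\mu$; then observe the remaining parts follow by the same template, with \ref{LL2ECC} and \ref{LLECC} being the diagonal cases $\nu=\mu$ of \ref{LL2ECCP} and \ref{LLECCP}.
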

\begin{proof} First note that \ref{LL2ECC} and \ref{LLECC} are just the case $\nu=\mu$ of \ref{LL2ECCP} and \ref{LLECCP}, respectively. We prove part \ref{LL2UCP}; the remaining parts may be proved in a similar fashion.

Suppose $f\in {L^1_\mu}+{L^\infty_\mu}$ and $g\in\2{L^1_\nu}+{L^\infty_\nu}$ satisfy $K(t,g;\2{L^1_\nu},{L^\infty_\nu})\le K(t,f;{L^1_\mu},{L^\infty_\mu})$. Then $K(t, E_\nu g;\2{L^1},L^\infty)\le K(t, E_\mu f;L^1,L^\infty)$ so, by Theorem \ref{C2LL}\ref{2URC},
there exists a linear operator $T:(L^1,L^\infty)\to(\2{L^1},L^\infty)$, of norm at most $m$, such that $T( E_\mu f)= E_\nu g$. The operator $\bar T= E_\nu^{-1}A_\nu T E_\mu:({L^1_\mu},{L^\infty_\mu})\to(\2{L^1_\nu},{L^\infty_\nu})$ has norm at most $m$ and
\[
\bar Tf= E_\nu^{-1}A_\nu T E_\mu f= E_\nu^{-1}A_\nu E_\nu g=g.\qedhere
\]
\end{proof}

We leave it to the reader to verify that Lemmas \ref{Kdiv}, \ref{2embed} and \ref{embed} and Theorems \ref{2Op}, \ref{Op}, \ref{2iff} and \ref{iff} hold with $L^1$, $L^\infty$, $\2{L^1}$, and $(L^\infty)^o$ replaced by ${L^1_\lambda}$, ${L^\infty_\lambda}$, $\2{L^1_\lambda}$, and $(L^\infty_\lambda)^o$, respectively. Essentially one replaces each operator $O$ by $ E_\lambda^{-1}A_\lambda O E_\lambda$, applying Corollary \ref{Ksubs} when needed. Note that Corollary 3.9 of \cite{MS} was extended to general measures in Corollary 4.7 of the same paper.

The first statement in each of Theorems \ref{2CC} and \ref{CC} may be extended to the general measure case by the same procedure (and this includes extensions of the implications \ref{ri}$\implies$\ref{ldm} and \ref{ri}$\implies$\ref{lev} of Corollary \ref{allCC}). However, it appears likely that the second statement of each will fail for certain measures. The example below shows that the couple $({L^1_\lambda}, {L^\infty_\lambda})$ may have a rich collection of exact interpolation spaces, while both $(\2{L^1_\lambda},{L^\infty_\lambda})$ and $({L^1_\lambda},(L^\infty_\lambda)^o)$ become trivial.

\begin{example} Let $\lambda$ be the probability measure on $\mathbb R$ consisting of atoms of weight $2^{-k}$ at $k$, for $k\in\mathbb N$. If $X\in \Int_1(L^1_\lambda,L^\infty_\lambda)$ then $\2X={L^\infty_\lambda}$ with equivalent norms and $ X^o={L^1_\lambda}$ with equivalent norms.
\end{example}
\begin{proof} The constant function $1$ is in ${L^1_\lambda}\cap{L^\infty_\lambda}$ and hence in $X$. Thus $0<\|1\|_X<\infty$. A calculation shows that
\[
1^{**}=\min(1,\tfrac1x)\le2\min(1,\tfrac1{2x})=2\chi_{\{1\}}^{**}.
\]
Since $X\in \Int_1(L^1_\lambda,L^\infty_\lambda)$, $\|1\|_X\le 2\|\chi_{\{1\}}\|_X$. For any $\lambda$-measurable $f$, $\|f\|_{{L^\infty_\lambda}}\chi_{\{1\}}\le\1f\le\|f\|_{{L^\infty_\lambda}}$ and therefore
\[
\|f\|_{\2X}=\|\1f\|_X\le\|f\|_{{L^\infty_\lambda}}\|1\|_X\le2\|f\|_{{L^\infty_\lambda}}\|\chi_{\{1\}}\|_X\le2\|\1f\|_X=2\|f\|_{\2X}.
\]
This proves the first statement. Since $ f^o(1)\chi_{\{1\}}\le f^o\le f^o(1)$, 
\[
\|f\|_{ X^o}=\| f^o\|_X\le  f^o(1)\|1\|_X\le2 f^o(1)\|\chi_{\{1\}}\|_X\le2\| f^o\|_X=2\|f\|_{ X^o}.
\]
Combining this with $\|f\|_{{L^1_\lambda}}=\| f^o\|_{{L^1_\lambda}}\le f^o(1)\le2\| f^o\|_{{L^1_\lambda}}$ proves the second statement.
\end{proof}


\begin{thebibliography}{14}


\bibitem{AB}
Aliprantis, C. D., Burkinshaw, O.:
Positive operators.
Reprint of the 1985 original.
Springer, Dordrecht (2006)

\bibitem{BLT}
Belinskii, E. S., Liflyand, E. R., Trigub, R. M.:
The Banach algebra $A^*$ and its properties.
J. Fourier Anal. Appl., \textbf{3}, 103--129 (1997)

\bibitem{Be}
Beurling, A.:
On the spectral synthesis of bounded functions.
Acta Math., \textbf{81}, 225--238 (1948)

\bibitem{BS1}
Bennett, C., Sharpley, R.:
$K$-divisibility and a theorem of Lorentz and Shimogaki.
Proc. Amer. Math. Soc., \textbf{96}, 585--592 (1986)

\bibitem{BS2}
Bennett, C., Sharpley, R.:
Interpolation of operators.
Pure and Applied Mathematics 129,
Academic Press, Inc., Boston, MA (1988)

\bibitem{BK}
Brudny{\u\i}, Yu. A., Krugljak, N. Ya.:
Interpolation functors and interpolation spaces. Vol. I.
North-Holland Mathematical Library 47;
North-Holland Publishing Co., Amsterdam (1991)

\bibitem{C} Calder{\'o}n, A.-P.:
Spaces between $L^{1}$ and $L^{\infty }$ and the theorem of Marcinkiewicz.
Studia Math., \textbf{26}, 273--299 (1966)

\bibitem{Ka} Kalton, N. J.:
Calder\'on couples of rearrangement invariant spaces.
Studia Math., \textbf{106}, 233--277 (1993)

\bibitem{CNS} Cwikel, M., Nilsson, P. G., Schechtman, G.:
Interpolation of weighted Banach lattices. A characterization of
   relatively decomposable Banach lattices.
Mem. Amer. Math. Soc., \textbf{165}, no. 787 (2003)

\bibitem{Halp} Halperin, I.:
Function spaces.
Canadian J. Math., \textbf{5}, 273--288 (1953)

\bibitem{L} Le{\'s}nik, K.:
Monotone substochastic operators and a new Calder\'on couple.
Studia Math., \textbf{227}, 21--39 (2015)

\bibitem{M1} Masty{\l}o, M.:
Lattice structures on some Banach spaces.
Proc. Amer. Math. Soc., \textbf{140}, 1413--1422 (2012)

\bibitem{MS} Masty{\l}o, M., Sinnamon, G.:
A Calder\'on couple of down spaces.
J. Funct. Anal., \textbf{240}, 192--225 (2006)
 
\bibitem{M} Mitjagin, B. S.:
An interpolation theorem for modular spaces (Russian).
Mat. Sb. (N.S.), \textbf{66 (108)}, 473--482 (1965)

\bibitem{S1991} Sinnamon, G.:
Weighted Hardy and Opial-type inequalities.
J. Math. Anal. Appl., \textbf{160}, 434--445 (1991)

\bibitem{SInt} Sinnamon, G.:
Interpolation of spaces defined by the level function.
In Harmonic analysis  (Sendai 1990); 
ICM-90 Satell. Conf. Proc., Springer, Tokyo, 190--193 (1991)

\bibitem{SLFD} Sinnamon, G.:
Spaces defined by the level function and their duals.
Studia Math., \textbf{111}, 19--52 (1994)

\bibitem{SS} Sinnamon, G., Stepanov, V. D.:
The weighted Hardy inequality: new proofs and the case $p=1$.
J. London Math. Soc. (2), \textbf{54}, 89--101 (1996)

\bibitem{Z} Zaanen, A. C.:
Integration (Completely revised edition of An Introduction to the Theory of Integration).
North-Holland Publishing Co., Amsterdam; 
Interscience Publishers John Wiley \& Sons, Inc., New York, (1967)

\end{thebibliography}
\end{document}